\documentclass{amsart}
\usepackage{amssymb,latexsym}
\usepackage{amsmath}
\usepackage{amscd}
\usepackage{graphicx}
 \usepackage{color}
\usepackage{enumerate}
\numberwithin{equation}{section}
\theoremstyle{plain}
 \newtheorem{theorem}{Theorem}[section]
 \newtheorem{lemma}[theorem]{Lemma}

\theoremstyle{definition}
 
 \newtheorem{remark}[theorem]{Remark}

\newenvironment{enumeratei}{\begin{enumerate}[\quad\upshape (i)]} {\end{enumerate}}

\newcommand \datum {July 23, 2017, extended version} 
\renewcommand \phi {\varphi}
\renewcommand \epsilon {{\boldsymbol\varepsilon}}
\newcommand \vvecr {\vec r\kern 1.5pt'}
\newcommand \ucirc {C_{\kern-1pt\textup{unit}}}
\newcommand \bnd [1] {\partial #1} 
\newcommand \lne {\ell}
\newcommand \flne[1] {{\ell^-_{#1}}}
\newcommand \llne[1] {{\ell^+_{#1}}}
\newcommand \fll[1] {{\ell_{#1}}}

\newcommand \dirangle[2] {\angle(#1,#2)^\to}

\newcommand \slcr [1] {\textup{Sli}(#1)}

\newcommand \dir [1] {\textup{dir}(#1)}
\newcommand \nothing [1] {}
\newcommand \phullo {\textup{Conv}} 
\newcommand\set [1]{\{#1\}}
\newcommand \tuple [1] {\langle #1 \rangle}
\newcommand \pair [2] {\tuple{#1,#2}}
\newcommand \real {\mathbb R}
\newcommand \plreal {{\real^+}}
\newcommand \preal {\real^{2}}
\newcommand \red [1] {{\color{red}#1\color{black}}}
\newcommand \tbf[1] {\textbf{#1}} 
\newcommand \dist[2]{\textup{dist}(#1,#2)}

\newcommand \fmidp[1] {\textup{midp}}

%


%
%
%
\begin{document}
\title[Characterizing  circles]
{Characterizing circles by a convex combinatorial property}

\author[G.\ Cz\'edli]{G\'abor Cz\'edli}
\email{czedli@math.u-szeged.hu}
\urladdr{http://www.math.u-szeged.hu/\textasciitilde{}czedli/}
\address{University of Szeged\\ Bolyai Institute\\Szeged,
Aradi v\'ertan\'uk tere 1\\ Hungary 6720}

\thanks{This research was supported by
NFSR of Hungary (OTKA), grant number K 115518}

\begin{abstract} 
Let $K_0$ be a compact convex subset of the plane $\preal$, and assume that $K_1\subseteq \preal$ is similar to $K_0$, that is, $K_1$ is the image of $K_0$ with respect to a similarity transformation $\preal\to\preal$.
Kira Adaricheva and Madina Bolat have recently proved that if $K_0$ is a disk and both $K_0$ and $K_1$ are included in a triangle with vertices $A_0$, $A_1$, and $A_2$, then there exist a $j\in \set{0,1,2}$ and a $k\in\set{0,1}$ such that $K_{1-k}$ is included in the convex hull of $K_k\cup(\set{A_0,A_1, A_2}\setminus\set{A_j})$. Here we prove that this property characterizes disks among compact convex subsets of the plane. In fact, we prove even more since we replace ``similar'' by 
``isometric'' (also called ``congruent'').  Circles are the boundaries of disks, so our result also gives a characterization of circles. 
\end{abstract}

\subjclass {Primary 52C99, secondary 52A01}

\dedicatory{Dedicated to the seventy-fifth birthday (in 2018) of L\'aszl\'o Hatvani }

\keywords{Convex hull, circle, disk, abstract convex geometry, anti-exchange system, Carath\'eodory's theorem, carousel rule, boundary of a compact convex set, lattice}

\date{\red{\datum}}

\maketitle

\section{Aim and introduction}
\subsection*{Our goal}
The real plane and the usual convex hull operator on it will be denoted by $\preal$ and $\phullo$, respectively. That is, for a set $X\subseteq \preal$ of points,  $\phullo(X)$ is the smallest convex subset of $\preal$ that contains $X$. 
As usual, if $X$ and $Y$ are subsets of $\preal$ such that $Y=\phi(X)$ for a similarity transformation or an isometric transformation $\phi\colon\preal\to\preal$, then $X$ and $Y$ are \emph{similar} or \emph{isometric} (also called \emph{congruent}), respectively. \emph{Disks} are convex hulls of circles and \emph{circles} are boundaries of disks.  The singleton subsets of $\preal$ are both disks and circles. A \emph{compact} subset of $\preal$ is a  topologically closed and bounded subset. 
Our aim is to  prove the  following theorem.

\begin{theorem}\label{thmmain} If $K_0$ is a compact convex subset of the plane $\preal$, then the following three conditions are equivalent.
\begin{enumeratei}
\item\label{thmmaina} $K_0$ is a disk.
\item\label{thmmainb} For every  $K_1\subseteq \preal$ and for arbitrary points $A_0,A_1,A_2\in \preal$, if $K_1$ is similar to $K_0$ and both $K_0$ and $K_1$ are contained in the triangle $\phullo(\set{A_0,A_1,A_2})$, then there exist a $j\in \set{0,1,2}$ and a $k\in\set{0,1}$ such that $K_{1-k}$ is contained in $\phullo\bigl(K_k\cup(\set{A_0,A_1, A_2}\setminus\set{A_j})\bigr)$.
\item\label{thmmainc} The same as the second condition but ``similar'' is replaced by
``isometric''.
\end{enumeratei}
\end{theorem}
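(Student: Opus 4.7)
The plan is to reduce Theorem~\ref{thmmain} to a single nontrivial implication. The implication \eqref{thmmainb}$\Rightarrow$\eqref{thmmainc} is immediate, since every isometric transformation is a similarity transformation, so the hypothesis in \eqref{thmmainb} is stronger than that in \eqref{thmmainc}. The implication \eqref{thmmaina}$\Rightarrow$\eqref{thmmainb} is the Adaricheva--Bolat theorem cited in the abstract. So the whole proof reduces to \eqref{thmmainc}$\Rightarrow$\eqref{thmmaina}, which I would prove by contraposition: assuming that $K_0$ is a compact convex subset of $\preal$ that is not a disk, I will exhibit an isometric copy $K_1$ of $K_0$ and three points $A_0,A_1,A_2\in\preal$ such that $K_0\cup K_1\subseteq \phullo(\set{A_0,A_1,A_2})$ but, for every $j\in\set{0,1,2}$ and $k\in\set{0,1}$,
\[
K_{1-k}\not\subseteq \phullo\bigl(K_k\cup(\set{A_0,A_1,A_2}\setminus\set{A_j})\bigr).
\]

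The geometric reading of these six conditions guides the construction. For a fixed $j$, the set $\phullo(K_k\cup(\set{A_0,A_1,A_2}\setminus\set{A_j}))$ is the convex hull of $K_k$ together with the edge opposite to $A_j$; it is the region one obtains by pushing that edge of the triangle ``up to $K_k$''. So the goal is to arrange $K_0$ and $K_1$ inside the triangle so that, simultaneously in both directions ($k=0$ and $k=1$), a point of the other set sticks out past each of the three edge-caps. The plan is to tailor the triangle and the isometry to a specific non-circular feature of $\partial K_0$.

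First I would dispatch the degenerate possibilities. A singleton is a disk, so it cannot arise; if $K_0$ is a line segment of positive length, a direct ad hoc construction (two parallel copies inside a thin isosceles triangle) furnishes the counterexample. Thereafter I assume $K_0$ has nonempty interior. Since $K_0$ is not a disk, $\partial K_0$ is not a circle, and I split into two cases. In the flat-side case, $\partial K_0$ contains a maximal straight segment $S$; I circumscribe $K_0$ by a triangle that has $S$ lying along one side, and take $K_1$ to be $K_0$ slid a short distance along $S$ (or reflected across its perpendicular bisector) so that $K_0$ and $K_1$ protrude past each other in different directions. In the strictly convex but non-circular case, the classical characterization of the circle (three-point circumscribing circle, or constancy of the radius of curvature) furnishes either a non-smooth boundary point $P$, or two boundary points with different local curvatures. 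Around such a point I can find a circumscribing triangle whose sides make specified angles with the relevant support lines, and a tiny isometry (rotation about $P$ if it is a corner, or a reflection/translation in the smooth case) yielding a copy $K_1$ whose distinguished feature sits on a different edge of the triangle than that of $K_0$.

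The hard part will be the last of these cases: a smooth, strictly convex but non-circular $K_0$. Here no single ``corner'' or ``flat'' is available, and one must verify all six failures from a curvature-mismatch alone. I expect to use the swing-lemma / carousel-rule machinery developed in the paper (\textit{slim swing lemma} and \textit{strong swing lemma}) to quantify how a small rotation of $K_0$ inside the triangle produces transverse boundary displacements in three independent directions at once; the arcs of different curvature then guarantee that $K_1$ exits each of the three edge-caps of $K_0$, and by symmetry $K_0$ exits those of $K_1$. Once the six inequalities are verified, the construction contradicts \eqref{thmmainc}, completing the proof of Theorem~\ref{thmmain}.
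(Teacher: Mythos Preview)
Your reductions are correct: only \eqref{thmmainc}$\Rightarrow$\eqref{thmmaina} needs work, and contraposition is a reasonable stance. But the proposal has a genuine gap in the decisive case and also misidentifies the paper's machinery.

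First, the lemmas you plan to invoke do not exist here. The theorem environments \texttt{strswlemma} and \texttt{sslemma} appear in the preamble but are never used in the body; there is no ``slim swing lemma'' or ``strong swing lemma'' in this paper. What the paper actually proves and uses are the Intersection Lemma (Lemma~\ref{lemmaintersect}) and the Cross Lemma (Lemma~\ref{lemmacross}). The Cross Lemma is the key reduction you are missing: it shows that if $K$ satisfies \eqref{thmmainc} and $K'$ is isometric to $K$, then $K$ and $K'$ cannot \emph{cross} (in the precise sense of having two common supporting lines each meeting $K\setminus K'$ first and $K'\setminus K$ last). Once that is in hand, every subsequent step only needs to produce a crossing isometric copy, not a full triangle with all six failures verified.

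Second, your case split does not carry the argument. The ``smooth, strictly convex, non-circular'' case is where almost all the difficulty lies, and your plan there (``curvature mismatch'', ``small rotation produces transverse displacements in three directions'') is not a proof. A $C^1$ convex boundary need not have a curvature anywhere, so an argument built on comparing radii of curvature has no foothold. The paper handles this case not by a single construction but by a chain of properties, each proved by producing a crossing pair and invoking the Cross Lemma: opposite supporting lines are perpendicularly opposed \eqref{eqtxtoppsuplines}, the boundary is smooth \eqref{eqtxtsmoothallpnts}, tangent lines touch at a single point \eqref{eqtxtntGntRl}, perpendicular diagonals bisect each other \eqref{eqtxtperhalv}, $K$ is symmetric about every diagonal \eqref{eqtxKsmwrtDgnls}, hence centrally symmetric with all diagonals through the center, so the tangent at each boundary point is perpendicular to the radius \eqref{eqtxtdpzRprdklr}. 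The conclusion that $\partial K$ is a circle then comes from a first-order ODE uniqueness argument, not from any curvature comparison. Your proposal would need to replace this entire chain with a single explicit counterexample construction valid for an arbitrary smooth strictly convex non-disk, and nothing in the sketch indicates how to do that.
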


Our main achievement is that  \eqref{thmmainc} implies \eqref{thmmaina}. The implication   \eqref{thmmaina}  $\Rightarrow$ \eqref{thmmainb} was discovered and proved by  Adaricheva and Bolat \cite{kaczg}; see also Cz\'edli~\cite{czgabp} for a shorter proof. The implication 
\eqref{thmmainb} $\Rightarrow$ \eqref{thmmainc} is trivial.

\subsection{Prerequisites and motivation}
This paper is self-contained for most mathematicians. Interestingly enough, besides abstract convex geometry, the present work is motivated mainly by lattice theory.
For more about the background and motivation of this topic, the reader may want, but need not, to see, for example, 
Adaricheva~\cite{adaricarousel}, Adaricheva and Cz\'edli~\cite{kaczg},  Adaricheva and Nation~\cite {kirajbbooksection} and \cite{kajbn}, Cz\'edli~\cite{czgcoord}, \cite{czgcircles}, and \cite{czgabp}, 
Cz\'edli and Kincses~\cite{czgkj}, Edelman and Jamison~\cite{edeljam}, Kashiwabara,  Nakamura, and  Okamoto~\cite{kashiwabaraatalshelling}, Monjardet~\cite{monjardet}, and   Richter and Rogers~\cite{richterrogers}. 
Note that the property described in \ref{thmmain}\eqref{thmmainb} is called the ``Weak Carousel property'' in Adaricheva 
and Bolat~\cite{kabolat}. The motivation discussed above explains that a wide readership is targeted; geometers would need less details at some parts of the paper.

\subsection{Outline} In the rest of the paper, we prove Theorem~\ref{thmmain}. 
\footnote{\red{The first two versions of this paper are at  \texttt{http://arxiv.org/abs/1611.09331}, this is the third version (of \datum), and see the author's website  for possible updates, if any.}}

\section{Notation and terminology} If $X$ is a point and $Y$ is a line or another point, then their distance will be denoted by $\dist X Y$. 
For points $X_1,X_2\in\preal$, the closed line segment between $X_1$ to $X_2$ will be denoted by $[X_1,X_2]$. In this subsection, $H\subseteq\preal$ will denote a compact convex set. Its  \emph{boundary}  will be denoted by $\bnd H$. For a line $\lne$, if $H$ is contained in one of the closed halfplanes or in one of the open halfplanes determined by $\lne$, then we say that $H$ \emph{lies on one side} or \emph{lies strictly on one side} of $H$, respectively. If $\lne\cap H\neq\emptyset$ and $H$ lies in one of the halfplanes determined by $\lne$, then $\lne$ is  a \emph{supporting line} of $H$; in this case, $\lne\cap H\subseteq \bnd H$. 
The properties of supporting lines that we need here are more or less clear by geometric intuition;  see Cz\'edli and Stach\'o~\cite{czgstacho} for easily available details and proofs, or see Bonnesen and Fenchel~\cite{bonnesenfenchel} for a more advanced treatise.
A \emph{direction} is a point $\alpha$ on the
\begin{equation}
\text{\emph{unit circle}\quad $\ucirc:=\set{\pair x y\in\preal: x^2+y^2=1}$.}
\label{equnitCrlcle}
\end{equation}
Unless otherwise stated explicitly, we always assume that our lines, typically the supporting lines, are \emph{directed}; we denote the direction of such a line  $\lne$ by $\dir \lne\in \ucirc$. 
If $\lne$ is a supporting line of $H$, then 
\begin{equation}
\text{$\lne$ is always directed so  that $H$ is on its  left.}
\label{eqtxtlwSnthGt}
\end{equation}
Note that 
\begin{equation}
\parbox{6cm}{for each $\alpha\in\ucirc$, there is a unique supporting line $\lne$ such that $\dir\lne=\alpha$.}
\label{eqtxtuNspalpha}
\end{equation}
A \emph{secant} of $H$ is a line that passes through an interior point of $H$. We know from Yaglom and Boltyanski$\breve\i$~\cite[1-4 in page 7]{yaglomboltyanskii} that 
\begin{equation}
\text{
a secant intersects $\bnd H$ in exactly two points.}
\label{eqtxtSccsKtt}
\end{equation} 
Related to \eqref{eqtxtuNspalpha}, we formulate the following statement for later reference.

\begin{figure}[ht] 
\centerline
{\includegraphics[scale=1.0]{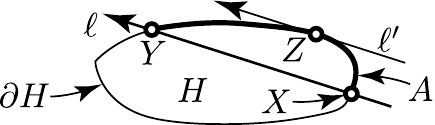}}
\caption{Illustration for Lemma~\ref{lemmaeqtxtdRlLe}
\label{figMjRrT}}
\end{figure}%

\begin{lemma}\label{lemmaeqtxtdRlLe}
Let $\lne$ be a secant of $H$ with $\set{X,Y}=\lne\cap\bnd H$ such that the arc $A$ of $\bnd H$ from $X$ to $Y$ going forward (that is, counterclockwise) is on the right of $\lne$; see Figure~\ref{figMjRrT}. Then $A$ has a unique last point $Z$ such that the line $\lne'$ through $Z$ with $\dir{\lne'}=\dir\lne$ is a supporting line of $H$.  Furthermore, all points of $A\setminus\set X$ that are not after $Z$ are strictly on the right of $\lne$. In particular, $Z$ and $\lne'$ are strictly on the right of $\lne$.
\end{lemma}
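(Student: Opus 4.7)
The plan is to reduce everything to studying the signed perpendicular distance to $\lne$ along the arc $A$. Pick a unit normal $\vec n$ of $\lne$ pointing to the right, and let $f\colon \preal\to\real$ be the affine function $f(P)=\vec n\cdot(P-X)$, so that $f>0$ strictly on the right of $\lne$, $f<0$ strictly on the left, and $f=0$ exactly on $\lne$. By assumption, $f\ge 0$ on $A$; and since $\lne$ is a secant, $H$ has interior points on each side of $\lne$, so $\max_{P\in H}f(P)=:M>0$. Restricting to $\bnd H$ and using that the complementary arc $\partial H\setminus(A\setminus\{X,Y\})$ lies on the opposite (left) side, we get $\max_{P\in \bnd H}f(P)=\max_{P\in A}f(P)=M$, which is attained because $A$ is compact and $f$ is continuous.

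Now I would define $Z$ to be the last point of $A$, in the counterclockwise parametrization from $X$ to $Y$, at which $f$ attains $M$. Existence of such a last point follows from the fact that $f^{-1}(M)\cap A$ is a nonempty closed subset of a compact linearly ordered arc, and uniqueness is tautological. Let $\lne'$ be the line through $Z$ parallel to $\lne$ with $\dir{\lne'}=\dir\lne$. Since $f\le M$ on $\bnd H$ and $H=\phullo(\bnd H)$, the affine function $f$ satisfies $f\le M$ on all of $H$, with equality at $Z\in H$; hence $\lne'$ is a supporting line of $H$ with $H$ on its left, consistent with the convention \eqref{eqtxtlwSnthGt}. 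Conversely, if $Z'\in A$ is any point such that the parallel line through $Z'$ with direction $\dir\lne$ is a supporting line, then by the uniqueness in \eqref{eqtxtuNspalpha} that line must coincide with $\lne'$, forcing $f(Z')=M$; this confirms that $Z$ is indeed the last point of $A$ with the stated property.

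For the remaining claims I would argue by contradiction using \eqref{eqtxtSccsKtt}. Suppose $W\in A\setminus\{X\}$ is not after $Z$ but lies on $\lne$, i.e.\ $f(W)=0$. Then $W\in \lne\cap\bnd H=\{X,Y\}$, so $W=Y$. But $Z$ cannot equal $Y$ (since $f(Y)=0<M=f(Z)$), and $Y$ is the terminal point of $A$, hence $Y$ is strictly after $Z$, contradicting the assumption that $W$ is not after $Z$. Thus every point of $A\setminus\{X\}$ up to and including $Z$ satisfies $f>0$; in particular $f(Z)=M>0$, so $Z$ and the parallel line $\lne'$ (whose signed distance from $\lne$ equals $M$) are strictly on the right of $\lne$.

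I do not anticipate a serious obstacle: the whole argument is driven by the affine function $f$ and the standard fact that a secant meets $\bnd H$ in exactly two points. The only mildly delicate step is verifying that the ``last'' point $Z$ is well defined, which is handled by compactness of $f^{-1}(M)\cap A$ in the natural linear ordering of $A$.
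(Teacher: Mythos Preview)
Your argument is correct and follows essentially the same route as the paper's proof: both obtain the parallel supporting line $\lne'$ (the paper via \eqref{eqtxtuNspalpha}, you via maximizing the affine functional $f$), take $Z$ as the last point on $\lne'\cap\bnd H$, and then rule out a non-strictly-right point on $A\setminus\{X\}$ before $Z$ by invoking \eqref{eqtxtSccsKtt} to force such a point into $\{X,Y\}$. Your packaging with the signed-distance function $f$ is slightly cleaner than the paper's continuity step (the paper finds an intermediate $P'\in\lne\cap A$, which in fact can be taken equal to $P$ once one notices $f\ge 0$ on $A$), but the substance is the same.
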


For $P\in\bnd H$, the first and last supporting lines through $P$, with respect to counterclockwise rotation, are the \emph{first semitangent} and the \emph{last semitangent} of $H$ through $P$,  respectively. If there is only one supporting line through $P$, then it is called the \emph{tangent line} through  $P$ (or at $P$).
\begin{equation}
\parbox{10.5cm}{For each $P\in\bnd H$, the first and last semitangent through $P$ uniquely exist; they will be denoted by $\flne P$ and $\llne P$, respectively.  When they coincide,  $\fll P:=\flne P=\llne P$ stands for the \emph{tangent line} through $P$.}
\label{eqtxtfllllnE}
\end{equation}

\begin{figure}[ht] 
\centerline
{\includegraphics[scale=1.0]{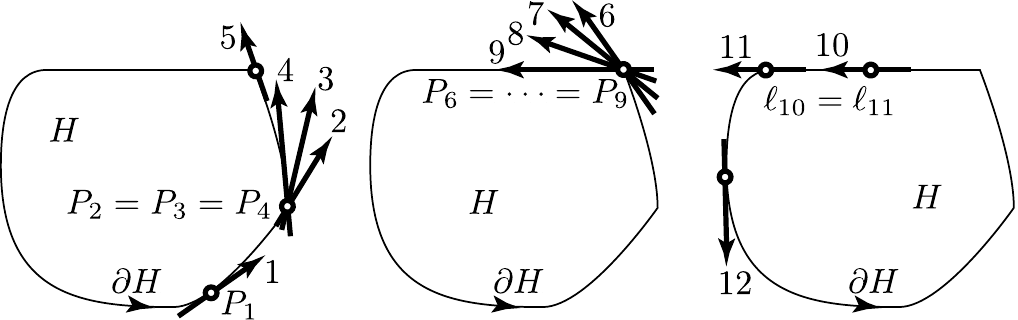}}
\caption{Pointed supporting lines
\label{figglide}}
\end{figure}%

By a \emph{pointed supporting line} we mean a pair $\pair P\lne$ such that $P\in \bnd H$ and $\lne$ is a supporting line of $H$ through $P$. As Figure~\ref{figglide}  shows, none of $P$ and $\lne$ determines the other in general. 
When we transform $\pair P\lne$ to another pointed supporting line continuously by sliding $P$ along $\bnd H$, or turn $\lne$, or doing both, then we \emph{slide-turn} $\pair P\lne$.
It is proved in Cz\'edli and Stach\'o~\cite{czgstacho} that a pointed supported line  
\begin{equation}
\parbox{11cm}{
$\pair P\lne$ can be \emph{slide-turned} continuously  around $H$ forward.}
\label{eqtxtslDtRn}
\end{equation}
In  Figure~\ref{figglide}, the supporting lines $\pair{P_1}{\lne_1}$,  $\pair{P_2}{\lne_2}$, \dots,  $\pair{P_{12}}{\lne_{12}}$, denoted simply by their subscripts, are consecutive snapshots of this slide-turning. 

\begin{figure}[ht] 
\centerline
{\includegraphics[scale=1.0]{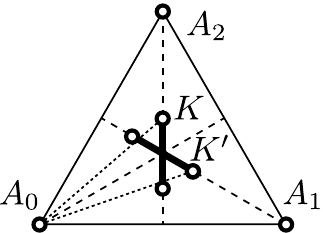}}
\caption{Excluding that $K$ is a line segment
\label{figrtQmsMgl}}
\end{figure}%

\section{Lemmas and proofs}\label{sectcompconv}
\begin{proof}[Proof of Lemma~\ref{lemmaeqtxtdRlLe}]
Applying \eqref{eqtxtuNspalpha} to the intersection of $H$ and the closed halfplane on the right of $\lne$, we obtain a supporting line $\lne'$  of $H$ such that  $\dir{\lne'}=\dir\lne$. Since the secant $\lne$ contains an interior point of $H$ by definition, $\lne'$ is strictly on the right of $\lne$. So is the last point, $Z$, of the closed line segment $\lne'\cap\bnd H=\lne'\cap H$. Suppose, for a contradiction, that the arc $A\setminus\set X$ has a point $P$  not after $Z$ such that $P$ is not strictly on the right of $\lne$. Since $\bnd H$ is known to be a rectifiable Jordan curve, see Yaglom and Boltyanski$\breve\i$~\cite{yaglomboltyanskii} or the survey part of  Cz\'edli and Stach\'o~\cite{czgstacho},  we conclude by continuity that there is a point $P'\in\lne\cap A \subseteq \lne\cap \bnd H$ between $P$ and $Z$. 
Since $\bnd H$ cannot have multiple points and $X, P, P', Z,$ and $Y$ follow in this counterclockwise order, possibly with $P=P'$, we obtain that $P'\neq Y$. Thus,  $X$, $Y$, and $P'$ are three distinct points in $\lne\cap \bnd H$, 
contradicting \eqref{eqtxtSccsKtt}.
\end{proof}

As already mentioned, 
we need to prove only the implication  \eqref{thmmainc} $\Rightarrow$ \eqref{thmmaina} for Theorem~\ref{thmmain}. 
 Since every singleton subset of $\preal$ is a disk, we assume that $K:=K_0$ is not a singleton. 
 Figure~\ref{figrtQmsMgl}, where the triangle is regular and $K':=K_1$, shows that if  $K:=K_0$ is a non-singleton line segment, then it fails to satisfy \ref{thmmain}\eqref{thmmainc}.  Clearly, if $K$ is not a line segment, then it contains three non-collinear points, whereby its interior is nonempty. 
Thus, in the rest of the paper, we assume that 
\begin{equation}
\parbox{7.0cm}{$K:=K_0\subseteq \preal$ is a compact set with non\-empty interior and it satisfies \ref{thmmain}\eqref{thmmainc}.}
\label{eqtxtdhgrhcVStVn}
\end{equation}
We need to prove that $K$ is a disk. In our  figures that follow, 
$K$ will be the  grey-filled set while $K'$ and $K^\ast$ will be isometric copies of $K$. 
For directed lines $\lne$ and $\lne'$, the 
\emph{directed angle} from $\lne$ to $\lne'$, denoted by $\dirangle \lne {\lne'}$,
is the unique $\alpha\in [0,2\pi)$ such that rotating $\lne$ counterclockwise by $\alpha$, we obtain a line of direction $\dir{\lne'}$. Usually, $\dirangle \lne {\lne'}\neq \dirangle {\lne'} {\lne}$. 
The following lemma is illustrated in Figure~\ref{figrtRngl} twice.

\begin{lemma}[Intersection Lemma]\label{lemmaintersect} Assuming \eqref{eqtxtdhgrhcVStVn},  let  $K'$ be isometric to $K$ and let $P$ be an intersection point of $K$ and $K'$. Assume that both $K$ and $K'$ have tangent lines through $P$, see \eqref{eqtxtfllllnE}, and let $\lne_P$ and $\lne'_P$ denote these unique tangent lines, respectively. If $\alpha:=\dirangle{\lne_P}{\lne_P'}$  is in the open interval $(0,\pi)$, then there is a common supporting line $\lne$ of $K$ and $K'$ such that $0<\dirangle{\lne_P}\lne<\alpha$, the first point $P^\dagger$ in $\lne\cap K'$ precedes the last point $P^\ddagger$ in  $\lne\cap K$, $P^\dagger\notin K$, and  $P^\ddagger\notin K'$.
\end{lemma}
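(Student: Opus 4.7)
My plan is to produce the common supporting line $\lne$ by a rotation--continuity argument and then extract all three order conditions from a single trigonometric identity. Choose coordinates so that $P$ is the origin and $\lne_P$ is the $x$-axis directed as $(1,0)$, forcing $K\subseteq\set{(x,y):y\ge 0}$; then $\lne'_P$ passes through the origin with direction $(\cos\alpha,\sin\alpha)$, and $K'\subseteq\set{(x,y):-x\sin\alpha+y\cos\alpha\ge 0}$. For $\beta\in[0,\alpha]$ let $c_K(\beta)$ and $c_{K'}(\beta)$ be the offsets such that the unique supporting lines from \eqref{eqtxtuNspalpha} of direction $(\cos\beta,\sin\beta)$ are $-x\sin\beta+y\cos\beta=c_K(\beta)$ and $-x\sin\beta+y\cos\beta=c_{K'}(\beta)$, with $K$ and $K'$ on the $\ge$ sides. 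Standard support-function theory makes $c_K$ and $c_{K'}$ continuous. Uniqueness of the tangents $\lne_P,\lne'_P$ at $P$, together with $\alpha\in(0,\pi)$, forces $c_K(0)=0>c_{K'}(0)$ and $c_K(\alpha)<0=c_{K'}(\alpha)$: if $c_{K'}(0)$ vanished, then $\lne_P$ would be a second tangent to $K'$ at $P$, and symmetrically at $\beta=\alpha$. Applying the intermediate value theorem to $\beta\mapsto c_{K'}(\beta)-c_K(\beta)$ produces $\beta^*\in(0,\alpha)$ with $c_K(\beta^*)=c_{K'}(\beta^*)$; write $c^*$ for this common (strictly negative) value. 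The resulting common supporting line $\lne$ satisfies $\dirangle{\lne_P}\lne=\beta^*\in(0,\alpha)$.

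To verify the ordering, let $A,P^\ddagger$ be the first and last points of $\lne\cap K$ along $\dir\lne$, and $P^\dagger,B$ those of $\lne\cap K'$. Set $I:=\lne\cap\lne_P$ and $I':=\lne\cap\lne'_P$. Direct calculation yields $I=(-c^*/\sin\beta^*,\,0)$ and $I'=\bigl(c^*/\sin(\alpha-\beta^*)\bigr)(\cos\alpha,\sin\alpha)$, and the identity $\cot x+\cot y=\sin(x+y)/(\sin x\sin y)$ then gives
\[
(I-I')\cdot\dir\lne=-c^*\bigl(\cot\beta^*+\cot(\alpha-\beta^*)\bigr)=\frac{-c^*\sin\alpha}{\sin\beta^*\sin(\alpha-\beta^*)}>0
\]
because $c^*<0$ and $\alpha,\beta^*,\alpha-\beta^*$ all lie in $(0,\pi)$, so $I'$ strictly precedes $I$ on $\lne$. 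Parametrizing $\lne$ as $I+t\dir\lne$ and using $K\subseteq\set{(x,y):y\ge 0}$ with $\sin\beta^*>0$ shows every point of $\lne\cap K$ has $t\ge 0$, so $A$ is at or past $I$; the symmetric argument with $\lne$ parametrized as $I'+t\dir\lne$, together with $\sin(\alpha-\beta^*)>0$, shows $B$ is at or before $I'$. The chain
\[
P^\dagger\le B\le I'<I\le A\le P^\ddagger
\]
on $\lne$ then follows, from which $P^\dagger$ strictly precedes $P^\ddagger$, $P^\dagger\notin[A,P^\ddagger]=\lne\cap K$ (so $P^\dagger\notin K$), and $P^\ddagger\notin[P^\dagger,B]=\lne\cap K'$ (so $P^\ddagger\notin K'$).

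The only nontrivial step I anticipate is the cotangent simplification that supplies the strict inequality $I'<I$; everything else relies only on continuity of support functions, the uniqueness of the tangents at $P$, and the elementary fact that a supporting line cannot cross into the half-plane containing its set. In particular, the carousel hypothesis \eqref{eqtxtdhgrhcVStVn} plays no role in this lemma.
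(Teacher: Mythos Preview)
Your argument is correct. The support-function setup, the IVT step, the verification that $c^*<0$ via uniqueness of the tangents at $P$, the cotangent identity yielding $I'<I$, and the half-plane bounds $\ell\cap K\subseteq\{I+t\,\dir\lne:t\ge0\}$, $\ell\cap K'\subseteq\{I'+s\,\dir\lne:s\le0\}$ all check out, and the final chain delivers exactly the three conclusions of the lemma. You are also right that neither the isometry $K'\cong K$ nor the carousel hypothesis~\eqref{eqtxtdhgrhcVStVn} is actually used; the statement holds for any two compact convex bodies with nonempty interior having unique tangents at $P$.

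Your route differs genuinely from the paper's. The paper argues geometrically: it first shows that $\lne'_P$ is a secant of $K$, invokes Lemma~\ref{lemmaeqtxtdRlLe} to get a supporting line $\lne^\ast$ of $K$ parallel to $\lne'_P$ with $K'$ strictly on its left, and then slide-turns the pointed supporting line $\pair{P}{\lne_P}$ forward around $K$ until the first moment $K'$ lies on the left, producing $\lne$. The fact $P^\ddagger\notin K'$ comes from another appeal to Lemma~\ref{lemmaeqtxtdRlLe}; the companion fact about $P^\dagger$ is obtained by the left--right dual argument, and uniqueness of the common tangent in the given direction-interval forces the two constructions to yield the same $\lne$. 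Your approach replaces the slide-turning machinery (imported from \cite{czgstacho}) and the duality step by a single coordinate computation; this is more self-contained and yields the strict separation $P^\dagger<A\le P^\ddagger$ and $P^\dagger\le B<P^\ddagger$ in one stroke. The paper's method, on the other hand, stays within its established geometric toolkit and reuses Lemma~\ref{lemmaeqtxtdRlLe}, which keeps the exposition uniform.
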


\begin{figure}[ht] 
\centerline
{\includegraphics[scale=1.0]{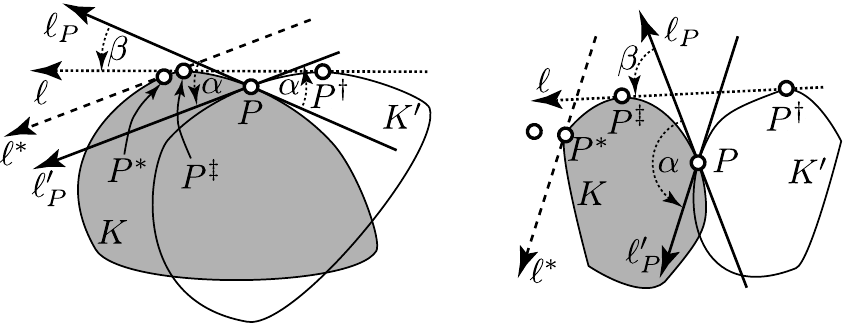}}
\caption{Illustration for Lemma~\ref{lemmaintersect}
\label{figrtRngl}}
\end{figure}%

\begin{proof}
Since $\alpha\in(0,\pi)$ and the tangent line $\lne_P$ is the unique supporting line of $K$ through $P$, see Figure~\ref{figrtRngl}, we obtain that $\lne_{P'}\neq\lne_{P}$ and $K$ is not on the left of $\lne_{P'}$. If we turn $\lne_P$ around $P$ a very little backward (clockwise) to get a line $\lne^\circ$ with $\dirangle {\lne^\circ}{\lne'_P}\in (0,\pi)$, then 
$\lne^\circ$ is not a supporting line of $K$ 
by the above-mentioned uniqueness. This implies that $K$ is not on the right of  $\lne'_P$. Thus, $\lne'_P$ is a secant of $K$. Hence, by Lemma~\ref{lemmaeqtxtdRlLe}, $K$ has a supporting line $\lne^\ast$ together with a unique last point $P^\ast\in \lne^\ast\cap \bnd K$ such that $\dir{\lne^\ast}=\dir{\lne_P'}$ and both $\dir{\lne^\ast}$ and $P^\ast$ are strictly on the right of $\lne_P'$. 
Since $K'$ is on the left of $\lne'_P$, $K'$ is strictly on the left of $\lne^\ast$.
Now, $\pair {P}{{\lne_P}}$ and $\pair {P^\ast}{{\lne^\ast}}$
are  pointed supporting lines of $K$, and we know that $K'$ is not on the left of $\lne_P$ but it is on the left of $\lne^\ast$.   Hence, when we are slowly slide-turning  $\pair {P}{\lne_P}$ to $\pair {P^\ast}{\lne}$ forward, see \eqref{eqtxtslDtRn}, we obtain a first pointed supporting line $\pair {P_0^\ddagger}{\lne}$ of $K$ such that $K'$ is on the left of $\lne$. By continuity, $\lne$ is a supporting line also of $K'$. Hence, it is a common supporting line of $K$ and $K'$. The intersection $\lne\cap\bnd K$ is a closed interval from $P_0^\ddagger$ to its other endpoint, which we denote by $P^\ddagger$. Note that $P^\ddagger$ can coincide with $P_0^\ddagger$. Note also that   $\pair {P^\ddagger}{\lne}$ is still a pointed supporting line of $K$, it is obtained from $\pair {P_0^\ddagger}{\lne}$ by slide-turning it forward, and $P^\ddagger$ is the last point on $\lne$ that belongs to $K$. We have that $0<\dirangle{\lne_P}\lne<\alpha$, since slide-turning forward changes the direction forward. Since slide-turning forward also changes the  point components forward,  $P^\ddagger$ is after $P$ but not after $P^\ast$. Since  $\lne_P$ is the only supporting line of $K$ through $P$ but, being distinct from $\lne_P'$, it is not a common supporting line, we have that $P^\ddagger\neq P$. Hence, by Lemma~\ref{lemmaeqtxtdRlLe}, $P^\ddagger$ is strictly on the right of $\lne'_P$, whereby $P^\ddagger\notin K'$. The slide-turning procedure that yielded $\lne$ makes it clear that
\begin{equation}
\parbox{8.1cm}{with direction in the open interval $(\dir{\lne_P},\dir{\lne_P'})$, $\lne$ is the only common tangent line of $K$ and $K'$.}
\label{eqtxtsdkhGmdl}
\end{equation}
Interchanging $\langle K,\lne_P,$forward$\rangle$ and 
 $\langle K',\lne'_{P},$backward$\rangle$,  in other words, by the \emph{left-right dual} of the argument above,
  we  obtain a common tangent line $\lne'$ of $K$ and $K'$ and a unique first point $P^\dagger\in\lne'\cap K'$. 
 It follows from \eqref{eqtxtsdkhGmdl} that $\lne=\lne'$. This completes the proof of Lemma~\ref{lemmaintersect}.
\end{proof}

\begin{lemma}[Cross Lemma]\label{lemmacross} Assuming \eqref{eqtxtdhgrhcVStVn},  let  $K'$ be isometric to $K$ and let  $t_1\neq t_2$ be common supporting lines of $K$ and $K'$. Then it is impossible that for each $i\in\set{1,2}$, the first point $U_i$ of $(K\cup K')\cap t_i$ is in $K\setminus K'$ while the last point
$U'_i$ of $(K\cup K')\cap t_i$ is in $K'\setminus K$;  here ``first'' and ``last'' are understood in the sense of \eqref{eqtxtlwSnthGt}.
\end{lemma}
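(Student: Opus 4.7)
My plan is to argue by contradiction using the Weak Carousel property of \ref{thmmain}\eqref{thmmainc}, which $K$ satisfies by the standing hypothesis \eqref{eqtxtdhgrhcVStVn}. Suppose common supporting lines $t_1 \neq t_2$ of $K$ and $K'$ exist with the forbidden ``$K$-first, $K'$-last'' configuration on each $t_i$. Since $t_1, t_2$ are distinct supporting lines of $K$ both having $K$ on the left, \eqref{eqtxtuNspalpha} forces $\dir{t_1} \neq \dir{t_2}$.

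I would then enclose $K \cup K'$ in a triangle $T = \phullo(\{A_0, A_1, A_2\})$ with two sides along $t_1$ and $t_2$ and the third along an auxiliary line $t_3$. If $\dir{t_1}$ and $\dir{t_2}$ are not opposite, I would take $t_3$ to be any supporting line of $\phullo(K \cup K')$ whose direction lies in the arc between $\dir{t_1}$ and $\dir{t_2}$, which closes the figure into a bounded triangle. If $\dir{t_2} = -\dir{t_1}$, so that $K \cup K'$ lies in the strip between $t_1$ and $t_2$, I would pick $t_3$ to be any line crossing the strip at a distance large enough to trap $K \cup K'$ inside the resulting triangle; the carousel property only requires $K, K' \subseteq T$, not tightness.

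Setting $K_0 := K$ and $K_1 := K'$, the carousel property produces indices $j \in \{0,1,2\}$ and $k \in \{0,1\}$ with $K_{1-k} \subseteq \phullo\bigl(K_k \cup (\{A_0,A_1,A_2\}\setminus\{A_j\})\bigr)$. I would refute each of the six possible $(j, k)$ cases using the four extremal points $U_1, U_1'$ on $t_1$ and $U_2, U_2'$ on $t_2$ supplied by the hypothesis. The crucial geometric observation is that $U_i \in K \setminus K'$ strictly precedes $K' \cap t_i$ while $U_i' \in K' \setminus K$ strictly follows $K \cap t_i$ in the direction of $t_i$; thus, on each side of $T$ that lies on $t_1$ or $t_2$, both $K$ and $K'$ reach points that the other does not. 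For each candidate edge $e_j := [A_i, A_{i'}]$, one of the four $U$'s witnesses a point of $K_{1-k}$ that falls outside $\phullo(K_k \cup e_j)$: when $k = 0$, so $K' \subseteq \phullo(K \cup e_j)$ is claimed, the appropriate $U_i'$ obstructs; when $k = 1$, symmetrically $U_i$ obstructs.

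The main obstacle will be the disciplined case analysis verifying, in each of the six $(j, k)$ subcases, that the chosen witness among $U_1, U_1', U_2, U_2'$ really does lie outside the relevant convex hull. This reduces to locating the witness on the ``wrong'' side of the chord from the appropriate vertex of $T$ to the endpoint of $K_k$'s contact with $e_j$, using \eqref{eqtxtSccsKtt} to control intersections. Particular care is needed when $K \cap t_i$ and $K' \cap t_i$ overlap in a subinterval, because then the strict non-memberships $U_i \notin K'$ and $U_i' \notin K$ must be invoked (rather than mere order on $t_i$) in order to separate the witness from the putative containing region.
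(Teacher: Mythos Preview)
Your overall strategy --- enclose $K\cup K'$ in a triangle and contradict \ref{thmmain}\eqref{thmmainc} by refuting all six $(j,k)$-cases --- is exactly the paper's strategy. The gap is in the execution: taking two sides of the triangle \emph{on} $t_1$ and $t_2$ makes two of the six cases collapse, and your proposed witnesses $U_i,U_i'$ cannot handle them.

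Concretely, with $A_0=t_1\cap t_2$, $A_1=t_2\cap t_3$, $A_2=t_1\cap t_3$, look at $(j,k)=(1,0)$: you must exhibit a point of $K'$ outside $\phullo(K\cup\{A_0,A_2\})$. But $A_0,A_2\in t_1$, so $U_1'$ lies on the segment $[A_0,A_2]\subseteq\phullo(\{A_0,A_2\})$; and since $A_0\in t_2$ lies beyond the last point of $K\cap t_2$, we have $U_2'\in[\text{last point of }K\cap t_2,\,A_0]\subseteq\phullo(K\cup\{A_0\})$. Thus neither of your two $K'$-witnesses is outside the hull. The symmetric problem occurs at $(j,k)=(2,1)$. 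Whether the non-inclusion itself still holds is a separate question requiring a different kind of witness and a genuinely new argument; your sketch does not supply one. This is precisely why the paper does \emph{not} put the triangle's sides on $t_1,t_2$: it slide-turns $\pair{U_2'}{t_2}$ around $K'$ and $\pair{U_1}{t_1}$ around $K$ to obtain perturbed lines $a_1,a_2$ with $K$ \emph{strictly} left of $a_1$ and $K'$ \emph{strictly} left of $a_2$, and builds the triangle from those (together with auxiliary $t_1',t_2'$ to place the far vertices). That strictness is what produces, for every $(j,k)$, a separating supporting line carrying one of the $U$-points to the wrong side.

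A second, smaller issue: in the parallel case $\dir{t_2}=-\dir{t_1}$ your construction does not produce a triangle at all, since two parallel lines and one transversal bound an unbounded half-strip. The paper absorbs this case into the non-parallel one precisely because its sides $a_1,a_2$ are already slight rotations of $t_2,t_1$ and hence never parallel.
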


The name ``Cross Lemma'' comes from the visual idea that the excluded situation means that $K$ and $K'$ cross each other; see Figures~\ref{figprfrtcrsL},  \ref{figcTngLcrL}, and  both parts of Figure~\ref{figrtcrsSll}.  Let us agree that $K$ and $K'$ \emph{cross each other} if the prohibited situation described in Lemma~\ref{lemmacross} holds for $\pair K{K'}$ or $\pair{K'}K$; the solid lines and the dashed lines on the right of Figure~\ref{figrtcrsSll} indicate that both cases can simultaneously happen. 
The Cross Lemma says that if $K'$ is isometric to $K$, then $K$ and $K'$ cannot cross each other.

\begin{figure}[ht] 
\centerline
{\includegraphics[scale=1.0]{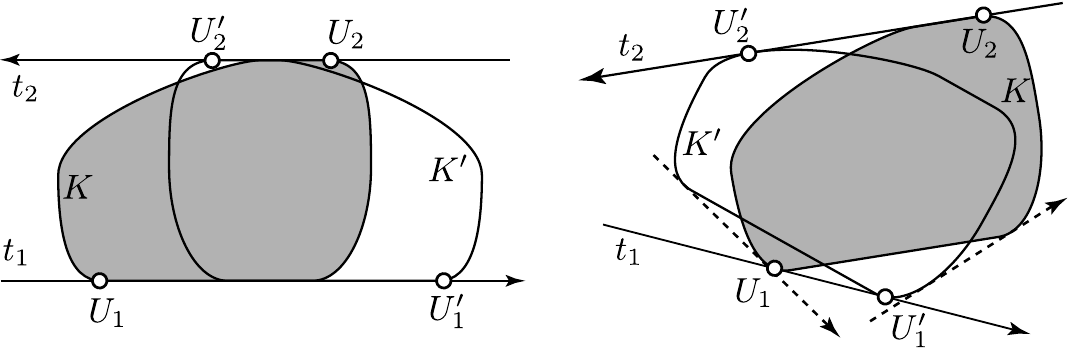}}
\caption{Illustration for the Cross Lemma
\label{figrtcrsSll}}
\end{figure}%

\begin{proof}[Proof of Lemma~\ref{lemmacross}]
Suppose, for a contradiction, that 
$K$ and $K'$ cross each other;  see Figures~\ref{figprfrtcrsL} and \ref{figcTngLcrL}, which exemplify different angles formed by $t_1$ and $t_2$.

\begin{figure}[ht] 
\centerline
{\includegraphics[scale=1.0]{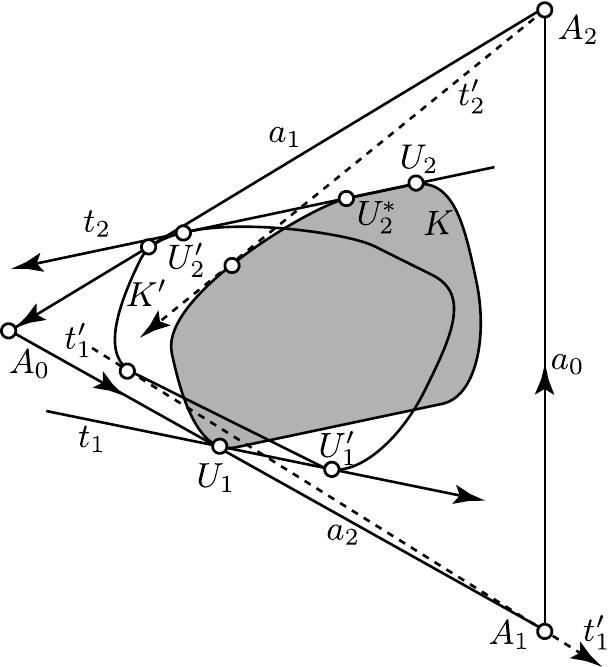}}
\caption{Illustration for the proof of the Cross Lemma
\label{figprfrtcrsL}}
\end{figure}%

First, assume that $\dir{t_1}=\dir{t_2}+\alpha$ for some $\alpha\in (0,\pi]$, where the addition is understood modulo $2\pi$. This means that going forward along $t_2$ from $U_2'$ and  backwards along $t_1$ from $U_1$, we reach their intersection point. This point is indicated neither in Figure~\ref{figprfrtcrsL}, because it would be too far on the left, nor in Figure~\ref{figcTngLcrL}, because it would make the figure too crowded. Note that ``left'' is the direction of $(\dir{t_2}+(\pi+\dir{t_1}))/2$, which is the direction of the bisector of the directed angle from $t_2$ to $-t_1$. While ``left'' is more or less faithfully represented in Figure~\ref{figprfrtcrsL}, 
we have turned Figure~\ref{figcTngLcrL} clockwise 
to make it better fit the page.

Slide-turn the dashed supporting line $\pair{U_2}{t_2}$  around $K$  forward so that the direction changes but only to a very small extent; the supporting line (component of the pointed supporting line of $K$) that we obtain in this way is denoted by $t_2'$; see the upper dashed line in Figures~\ref{figprfrtcrsL} and \ref{figcTngLcrL}.
Similarly, we obtain the lower dashed supporting line $t_1'$ of $K'$ by slide-turning $\pair{U_1'}{t_1}$ around $K'$ backwards; again, the difference between $\dir{t'_1}$ and $\dir{t_1}$ should be very little.  
We can assume that these slide-turnings are chosen so  that $U_2'$ is strictly on the right of $t_2'$ but very close to it and, similarly, $U_1$ is strictly on the right of $t_1'$ but very close to it again. By $\langle$left, forward$\rangle$--$\langle$right, backward$\rangle$-duality, it suffices to show that the first slide-turning, which gives $t_2'$,  exists; 
the argument is the following. Let $U_2^\ast$ be the last point of $K\cap t_2$, see Figure~\ref{figprfrtcrsL}; it may coincide with $U_2$. Slide-turn $\pair{U_2}{t_2}$ around $K$ forward to
$\pair{U_2^\ast}{t_2}$ first.
 Since $U_2'\notin K$ comes after $U_2^\ast$, when  we slide-turn $\pair{U_2^\ast}{t_2}$ around $K$ a very little further, the line component of the pointed supported line we obtain will be a suitable $t_2'$. We can assume that the directions have changed so little that $t_2'$ and $t_1'$ are non-parallel and intersect on the left of the figure. In particular, if $\dir{t_1}=\dir{t_2}+\pi$, that is, $\alpha=\pi$, that causes no problem in the above argument.

\begin{figure}[ht] 
\centerline
{\includegraphics[scale=0.89]{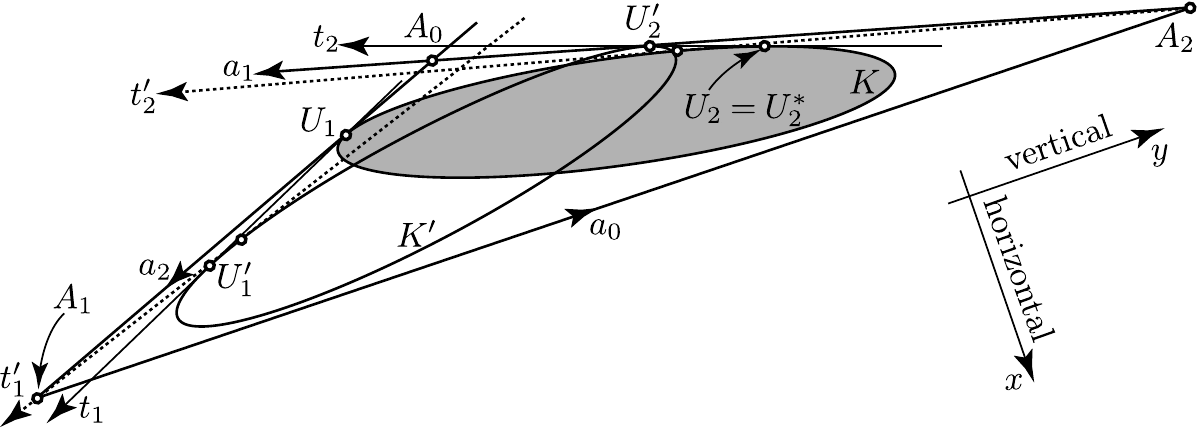}}
\caption{Another illustration for the proof of the Cross Lemma
\label{figcTngLcrL}}
\end{figure}%

Next, we slide-turn $\pair{U'_2}{t_2}$ forward around $K'$ to change the direction only a little;  let $a_1$ be the supporting line we obtain this way. Using that $K$ is bounded, it is on the left of $t_2$, and does not contain $U_2'$, it follows that if $\dir {a_1}-\dir{t_2}$ is small enough, then $K$ remains  on the left of $a_1$; furthermore and this is what we need, $K$ will be \emph{strictly}  on the left of $a_1$ in this case.   Hence, we can assume that $K$ is strictly on the left of $a_1$. Also, if $\dir {t_2'}-\dir{t_2}$ in the earlier slide-turning was small enough and now $\dir {a_1}-\dir{t_2}$ is even smaller, than $a_1$ and $t_2'$ intersect sufficiently far on the right at a point $A_2$. 
 Similarly, by slide-turning $\pair{U_1}{t_1}$  backward  around $K$ with changing the direction only a little, we obtain a supporting line $a_2$ of $K$ such that $K'$ is \emph{strictly} on the left of $a_2$. Again, we can assume that the intersection point $A_1:=a_2\cap t_1'$ is far on the right. Furthermore, continuity allows us to adjust the above-mentioned little quantities so that the directed line $a_0$ 
from $A_1$ to $A_2$ is of slope $\pi/2$ and, since it is sufficiently far, both $K$ and $K'$ are on the left of $a_0$.  Clearly, both $K$ and $K'$ are contained in the left halfplanes determined by $a_0$, $a_1$, and $a_2$. Since the intersection of these halfplanes is the triangle $\phullo{\set{A_0,A_1,A_2}}$, both  $K$ and $K'$ are contained in this triangle. Hence, to complete the proof by contradiction, we need to show that the conclusion part of \ref{thmmain}\eqref{thmmainc}, see at  \ref{thmmain}\eqref{thmmainb}, fails. Depending on $j\in\set{0,1,2}$ and $k\in\set{0,1}$ (that is, choosing between $K$ and $K'$), there are six cases.

First, if we slowly slide-turn $\pair{U_2^\ast}{t_2}$ around $K$ a little forward, then we arrive at a pointed supporting line whose line component, denoted by $t_2^\dagger$,  goes through $A_0$. Note that $t_2^\dagger$ is not  indicated in the figures. While $A_0$, $A_1$, and $K$ are on the left of 
$t_2^\dagger$, $U_2'\in K'$ is not. Hence, $K'\nsubseteq \phullo(K\cup \set{A_0,A_1})$.

Second, if we slide-turn $\pair {U_1}{t_1}$ around $K$ forward so that the direction changes only a very little and $t_1^\dagger$  denotes the line component of the pointed supporting line we obtain in this way, then $K$, $A_0$, and $A_2$ will be on the left of $t_1^\dagger$ but $U_1'\in K'$ will not. (Again, $t_1^\dagger$ is not drawn in the figures.) This shows that $K'\nsubseteq \phullo(K\cup \set{A_0,A_2})$.

Third, since $A_1$, $A_2$, and $K$ are on the left of $t_2'$ but $U'_2\in K'$ is not, we obtain that $K'\nsubseteq \phullo(K\cup \set{A_1,A_2})$. So far, we have shown that 
\begin{equation}K'\nsubseteq \phullo(K\cup (\set{A_0,A_1,A_2}\setminus\set{A_j}))\text{ for every }j\in\set{0,1,2}.
\label{eqjdmMMnbhFKv}
\end{equation}
Interchanging $\tuple{K,\text{forward},1,2}$ with
 $\tuple{K',\text{backward},2,1}$, we obtain that  
\begin{equation}K\nsubseteq \phullo(K'\cup (\set{A_0,A_1,A_2}\setminus\set{A_j}))\text{ for every }j\in\set{0,1,2}.
\label{eqjdmMsdjWnhbhS}
\end{equation}
Alternatively, slide-turn $\pair{U_2'}{t_2}$ around $K'$ backwards and use $U_2$ to check \eqref{eqjdmMsdjWnhbhS} for $j=2$, slide-turn $\pair{U_1'}{t_1}$ around $K'$ backwards and use $U_1$ for $j=1$, and use $t_1'$ and $U_1$ for $j=0$. The conjunction of \eqref{eqjdmMMnbhFKv} and \eqref{eqjdmMsdjWnhbhS} contradicts \eqref{eqtxtdhgrhcVStVn}. Thus, we have shown that $\dir{t_1}=\dir{t_2}+\alpha$ (mod $2\pi$) with $\alpha\in (0,\pi]$  is impossible.

\begin{figure}[ht] 
\centerline
{\includegraphics[scale=0.89]{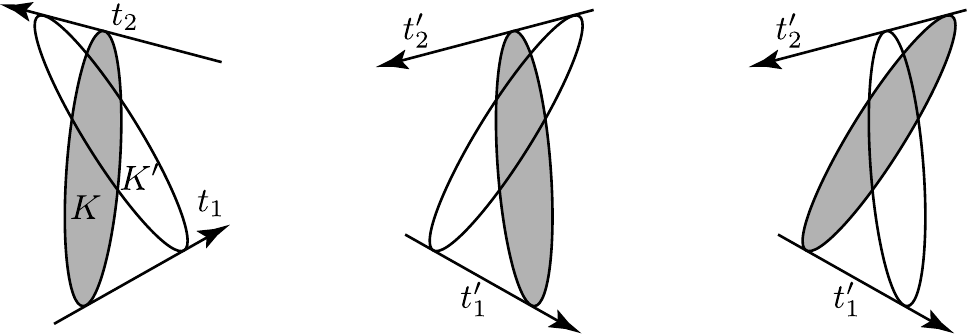}}
\caption{Reducing the case $\alpha\in (\pi,2\pi)$ to the case $\alpha\in (0,\pi]$
\label{fighnGTngLrDcS}}
\end{figure}%

Finally, if  $\dir{t_1}=\dir{t_2}+\alpha$ for some $\alpha\in (\pi,2\pi)$ modulo $2\pi$, then Figure~\ref{fighnGTngLrDcS} shows how to reduce this case to the already treated case  $\alpha\in (0,\pi]$. Namely, we reflect $K$, $K'$, $t_1$, and $t_2$  on the left of the figure across a vertical axis to obtain the middle part of the figure.  For $i\in\set{1,2}$, let $t_i'$ denote the mirror image of $t_i$ with opposite orientation; this is necessary to make it a common supporting line. In the next step, we interchange the roles of the mirror images of $K$ and $K'$; see on the right of the figure. We have arrived at the already treated case. Therefore, no matter what $\alpha\in(0,2\pi)$ is, \eqref{eqtxtdhgrhcVStVn} implies that $K$ and $K'$ cannot cross each other. This completes the proof of (the Cross) Lemma~\ref{lemmacross}. 
\end{proof}

\begin{proof}[Proof of \eqref{thmmainc} $\Rightarrow$ \eqref{thmmaina} for Theorem~\ref{thmmain}] 
Assuming \eqref{eqtxtdhgrhcVStVn}, we need to prove that $K$ is a disk.  
To do so, we are going to prove more and more ``disk-like'' properties of $K$ by contradiction, using the following technique: 
\begin{equation}
\parbox{11cm}{after supposing that $K$ fails to satisfy the given property, we show the existence of a $K'$ or $K^\ast$ such that $K$ and $K'$ (or $K^\ast$) are isometric and they cross each other, and (the Cross) Lemma~\ref{lemmacross} gives a contradiction.}
\label{eqtxttchnQe}
\end{equation}
To ease the terminology by using adjectives like ``left'', ``upper'', etc., we often assume that an arbitrary supporting line is \emph{horizontal} with direction $\pi$.
This does not affect the generality, because we can always choose a coordinate system appropriately.

\begin{figure}[ht] 
\centerline
{\includegraphics[scale=1.0]{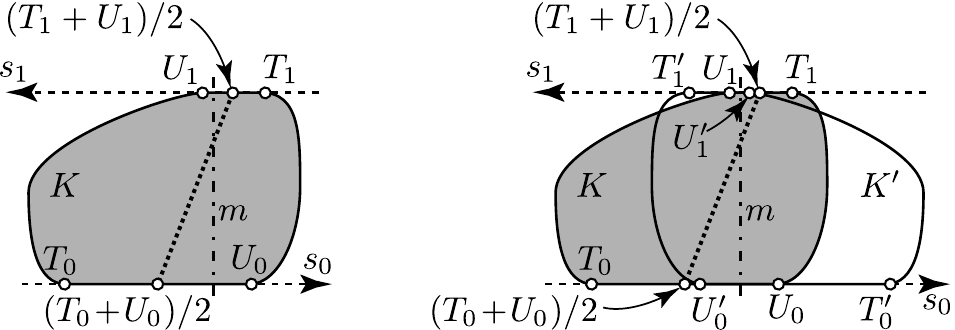}}
\caption{If \eqref{eqtxtoppsuplines} failed
\label{figslanted}}
\end{figure}%

Next, let $s_0$ and $s_1$ be two parallel supporting lines of $K$ with opposite orientation; 
see  Figure~\ref{figslanted}.  For $i\in\set{0,1}$, the intersection of $s_i$ and $K$ is a closed line segment $[T_i,U_i]\subseteq \bnd K$, possibly a singleton segment with $T_i=U_i$.  The \emph{middle point} of this line segment will be denoted by $(T_i+U_i)/2$. If the line through $(T_0+U_0)/2$ and $(T_1+U_1)/2$ is perpendicular to $s_0$, then we say that $s_0$ and $s_1$ are \emph{perpendicularly opposed}.  
We claim that 
\begin{equation}
\text{any two parallel  supporting lines of $K$ are  perpendicularly opposed.}
\label{eqtxtoppsuplines}
\end{equation}
Suppose, for a contradiction, that \eqref{eqtxtoppsuplines} fails; see Figure~\ref{figslanted}. Pick a line $m$ perpendicular to $s_0$ (and $s_1$) such that $(T_0+U_0)/2$ and $(T_1+U_1)/2$ are not on the same side of $m$. If we reflect $K$ across $m$ to obtain an isometric copy, $K'$, then $K$ and $K'$ cross each other; see on the right of Figure~\ref{figslanted}. By (the Cross) Lemma~\ref{lemmacross}, this is a contradiction that proves \eqref{eqtxtoppsuplines}.

\begin{figure}[ht] 
\centerline
{\includegraphics[scale=1.0]{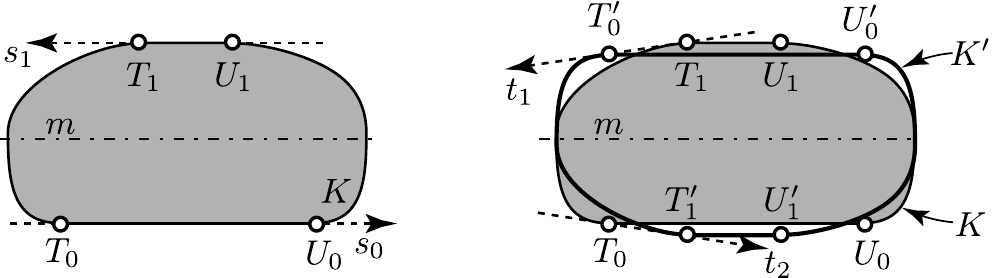}}
\caption{If \eqref{eqekvdSt} failed}
\label{figekvdSt} 
\end{figure}%

Next, using the notation given in the paragraph preceding \eqref{eqtxtoppsuplines}, we claim that 
\begin{equation}
\dist{T_0}{U_0} = \dist{T_1}{U_1}.
\label{eqekvdSt}
\end{equation}
If this fails, then pick a line $m$ parallel to and between $s_0$ and $s_1$ such $\dist m{s_0}$ is slightly smaller than $\dist m{s_1}$; see on the left of Figure~\ref{figekvdSt}. Reflecting $K$ across $m$, we obtain $K'$, see on the right; it follows from \eqref{eqtxtoppsuplines} that the situation is drawn in the figure correctly.  If $\dist m{s_1}-\dist m{s_0}$ is small enough, then the common supporting lines $t_1$ (obtained by slide-turning $\pair{T_1}{s_1}$ around $K$ a bit forward)  and $t_2$ (obtained by slide-turning $\pair{T_0}{s_0}$ around $K$ backward a little) of $K$ and $K'$  indicate that $K$ and $K'$ cross each other. Note that, say, $T_0$ need not belong to $t_2$, etc.. Since the situation contradicts Lemma~\ref{lemmacross}, we conclude~\eqref{eqekvdSt}.

\begin{figure}[ht] 
\centerline
{\includegraphics[scale=1.0]{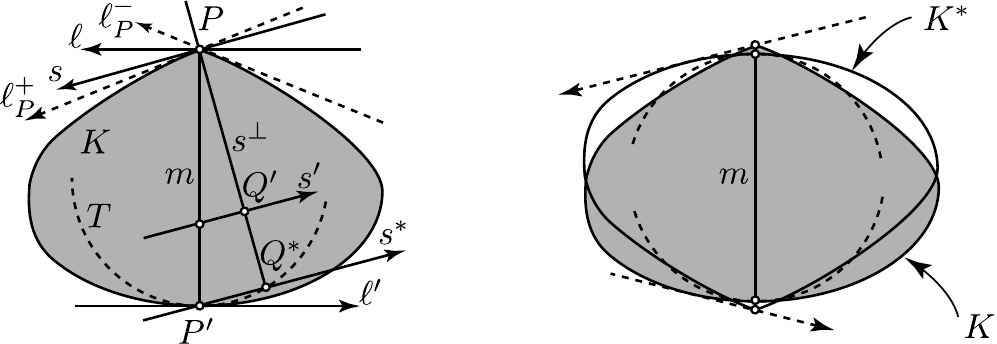}}
\caption{Corners lead to contradiction}
\label{figcrnRt} 
\end{figure}%

For $P\in\bnd K$, $\bnd K$ is \emph{smooth} at $P$ if $\flne P=\llne P$; see \eqref{eqtxtfllllnE}. If $\bnd K$ is not smooth at $P\in \bnd K$, then $P$ is a \emph{corner} of $K$ (and of $\bnd K$).  Next, we claim that  
\begin{equation}
\text{$\bnd K$ is smooth at each of its points.}
\label{eqtxtsmoothallpnts}
\end{equation}
Suppose, for a contradiction, that $P\in \bnd K$ such that  $\flne P\neq\llne P$; see Figure~\ref{figcrnRt} on the left.
Let $\lne$ be the supporting line through $P$ that halves the angle of $\flne P$ and $\llne P$; the figure is drawn such that $\dir\lne=\pi$. Clearly, $P$ is the only tangent point of $\lne$. 
Opposite to $\lne$, there is a unique supporting line $\lne'$ with $\dir{\lne'}=0$; it may have a full line segment of tangent points but let  $P'$ denote the middle one. We know from \eqref{eqtxtoppsuplines}
that the line $m$ through $P$ and $P'$ is vertical, that is, perpendicular to $\lne$. Let $T$ be the Thales circle of the line segment $[P,P']$; only a dashed circular arc of it is given in the figure.  Take an arbitrary supporting line $s$ through $P$; it is between $\flne P$ and $\llne P$.
Let $s'$ denote the supporting line parallel to $s$, and let $Q'$ be the middle tangent point on $s'$. Denoting the line through $P$ and $Q'$ by $s^\perp$, we know from \eqref{eqtxtoppsuplines} that $s^\perp$ is perpendicular to both $s$ and $s'$. Denote by  $s^\ast$ the directed line through $P'$ parallel to $s'$, and let $Q^\ast$ be the intersection point of $s^\ast$ and $s^\perp$.  Since  $s^\perp$ is perpendicular to $s^\ast$, $Q^\ast$ is on the Thales circle $T$. Clearly, if $\dist P{Q'}< \dist P{Q^\ast}$, then $P$ is on the right of the supporting line $s'$, which is impossible. Hence, $\dist P{Q'}\geq \dist P{Q^\ast}$, and we obtain that $Q'$ on $s^\perp$ is between $Q^\ast$ and $\lne'$. Hence, 
\begin{equation}
\text{in a neighborhood of $P'$, $\bnd K$ goes between $\lne'$ and $T$.}
\label{eqtxtlcsmcLkR}
\end{equation}
Since both $\lne'$ and $T$ are smooth at $P'$, where they touch each other, it follows that $\bnd K$ is smooth at $P'$. 
Finally, using that $K$ is smooth at $P'$ but ``acute'' at $P$, in particular, using \eqref{eqtxtlcsmcLkR}, it is easy to see the following:   if we reflect $K$ across a horizontal line only slightly closer to $\lne'$ than $\lne$, then we obtain an isometric copy $K'$ of $K$ such that $K$ and $K'$ cross each other; see on the right of Figure~\ref{figcrnRt}.  This contradicts (the Cross) Lemma~\ref{lemmacross} and proves \eqref{eqtxtsmoothallpnts}. 

\begin{figure}[ht] 
\centerline
{\includegraphics[scale=1.0]{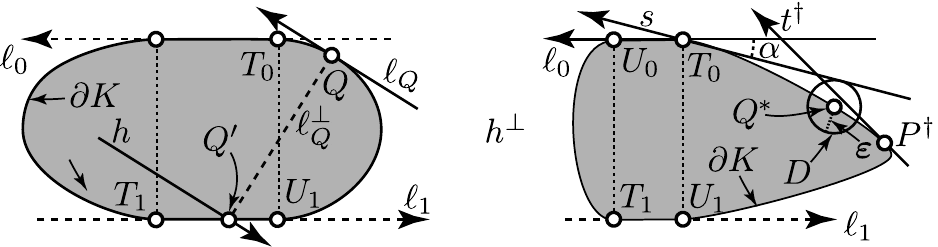}}
\caption{No ``tangent interval'' is possible}
\label{figngTZNmsQ} 
\end{figure}%

It follows from \eqref{eqtxtsmoothallpnts} that $\bnd K$ is everywhere smooth and   every supporting line of $K$ is a \emph{tangent line}; see  \eqref{eqtxtfllllnE}. In what follows, we will speak of tangent lines rather then supporting lines. So, for each $P\in\bnd K$, 
there is a unique \emph{tangent line} through $P$, and this tangent line is denoted by $\lne_P$.
Next, we claim that
\begin{equation}
\text{each tangent line of $K$ has exactly one tangent point.}
\label{eqtxtntGntRl}
\end{equation}
Suppose, for a contradiction, that \eqref{eqtxtntGntRl} fails. Let $\lne_0$ be a tangent line
for which  \eqref{eqtxtntGntRl} fails.  Then $\lne_0\cap K$ is a nontrivial line segment $[T_0,U_0]$. As usual, we can assume that $\dir{\lne_0}=\pi$.  Let $\lne_1$ be the tangent line parallel to $\lne_0$; see Figure~\ref{figngTZNmsQ}. It follows from \eqref{eqtxtoppsuplines} and \eqref{eqekvdSt} that the first and last tangent points on $\lne_0$ and $\lne_1$ form a rectangle $\tuple{T_0,U_0,T_1,U_1}$.

We know from \eqref{eqtxtsmoothallpnts} and Cz\'edli and Stach\'o~\cite{czgstacho} that \begin{equation}
\slcr K:=\set{\pair P{\dir{\lne_P}}: P\in\bnd K}\subseteq \real^4\text{  is a rectifiable Jordan curve.}
\label{eqtxtslcrrJc}
\end{equation}
This curve is the so-called \emph{slide curve} of $K$. Let $t^\dagger$ be the unique tangent line of $K$ with $\dir{t^\dagger}=3\pi/4$; see on the right of Figure~\ref{figngTZNmsQ}, and let $P^\dagger$ be the last point of $t^\dagger\cap \bnd K$. Denote by $B$ the arc of $\slcr K$ from $\pair{P^\dagger}{\dir{t^\dagger}}$ going forward to 
$\pair{T_0}{\dir{\lne_0}}$, and let $B^-:=B\setminus\set{\pair{P^\dagger}{3\pi/4}}$; the reader may want to but need not look into \cite{czgstacho} for details. (Our purpose with this arc is to make clear that Figure~\ref{figngTZNmsQ} on the right is more or less faithful.) The arc of $\bnd K$ from $P^\dagger$ to $T_0$ going forward will be denoted by $A$, and we let $A^-:=A\setminus\set{P^\dagger}$. 
We conclude by \cite[(4.9)--(4.11) and (4.16)]{czgstacho} that \begin{equation}
\text{if $\pair P{\dir{\lne_P}}\in B^-$, 
then $P\in A^-$.}
\label{eqtxtdhbnhmQsnW}
\end{equation}
Clearly, \eqref{eqtxtslcrrJc} allows us to take a  sequence $\vec s:=(\pair {P_n}{\dir{\lne_{P_n}}}: n\in\mathbb N)$ of elements of $B$ such that 
\begin{enumerate}
\item  $\pair{P_n}{\dir{\lne_{P_n}}}\neq\pair{T_0}{\dir{\lne_0}}$ and $\pair{P_n}{\dir{\lne_{P_n}}}\neq\pair{P^\dagger}{3\pi/4}$  for all $n\in\mathbb N$, 
\item $\pair{P_n}{\dir{\lne_{P_n}}}$ tends to  $\pair{T_0}{\dir{\lne_0}}=\pair{T_0}\pi$ as $n\to \infty$, and
\item\label{mzladglSd} $\dir{\lne_{P_n}}> 4\pi/5$ for all $n\in\mathbb N$.  
\end{enumerate}
Using the obvious inequalities 
\[
\begin{aligned}
\dist{\pair {X_1}{\dir{\lne_{X_1}}}}{\pair {X_2}{\dir{\lne_{X_2}}}}&\geq \dist {X_1}{X_2}\,\,\,\text{ and}\cr
\dist{\pair {X_1}{\dir{\lne_{X_1}}}}{\pair {X_2}{\dir{\lne_{X_2}}}}&\geq \dist{{\dir{\lne_{X_1}}}}{{\dir{\lne_{X_2}}}},
\end{aligned}
\]
we obtain that $P_n\to T_0$ and $\dir{\lne_{P_n}}\to \dir{\lne_0}=\pi$
 as $n\to\infty$. Furthermore, $P_n\in A^-$ by \eqref{eqtxtdhbnhmQsnW}.  Let $Q_n$ denote the middle point of the line section $\lne_{P_n}\cap \bnd K$. Since $\lne_{P_n}$ is a supporting line that contains $Q_n\in\bnd K$ and there is only one supporting line through $Q_n$ by  \eqref{eqtxtsmoothallpnts}, it follows that $\lne_{Q_n}=\lne_{P_n}$, whereby
\begin{equation}
\dir{\lne_{Q_n}}= \dir{\lne_{P_n}}\to \pi,\quad\text{as } n\to\infty.
\label{eqtxtasntoinfty}
\end{equation}
Note that for points $X,Y\in A$,  
\begin{equation}
\parbox{8.7cm}
{if we move   $X\in A$ slightly toward $Y\in A$ forward, then $\dir{\lne_X}$ moves toward to $\dir{\lne_Y}$ on  $\ucirc$ forward;}
\label{eqtxtmvFrWdnhB}
\end{equation}
this follows from a straightforward geometric consideration based on the fact that $X$ is on the left of $\lne_Y$ and $Y$ is on the left of $\lne_X$. Applying \eqref{eqtxtuNspalpha} to the intersection of $K$ and the right halfplane determined by the line from $P^\dagger$ to $T_0$, we obtain a point $P^\ddagger\in A\setminus\set{T_0,P^\dagger}$ such that  $\dir{\lne_{P^\ddagger}}=4\pi/5$. ($P^\ddagger$ is  between $P^\dagger$ and  $Q^\ast$ but it is not indicated in the figure.)  We can assume that $P^\ddagger$ is the first point of the line segment $\lne_{P^\ddagger}\cap \bnd K$. Thus, combining \eqref{mzladglSd}, \eqref{eqtxtasntoinfty}, and \eqref{eqtxtmvFrWdnhB}, it follows that
\begin{equation}
\text{$P_n$ and $Q_n$ are after $P^\ddagger$ on the arc $A^-$, for all $n\in\mathbb N$.}
\end{equation}

We claim that 
\begin{equation}
Q_n\to T_0\,\,\,\text{ as }\,\,\, n\to\infty.
\label{eqtxtQntndsTQ}
\end{equation}
Suppose, for a contradiction, that \eqref{eqtxtQntndsTQ} fails. Then 
the closed arc $A^\ddagger$ from $P^\ddagger$ to $T_0$ going forward, which is a subarc of $A$, is a compact set. Hence,  the sequence $(Q_n: n\in\mathbb N)$ has a limit point (also known as accumulation point) $Q^\ast\in A^\ddagger\setminus\set{T_0}$; see on the right of Figure~\ref{figngTZNmsQ}.
Thus, since $T_0$ is the first point of $\lne_0$, it follows easily from \eqref{eqtxtmvFrWdnhB} that $\pi>\dir{\lne_{Q^\ast}}\geq \dir{t^\dagger}=3\pi/4$. Combining this with the facts that $T_0$ is on the left of  $\lne_{Q^\ast}$ and  $Q^\ast$ is strictly below $\lne_0$, we obtain that $Q^\ast$ is strictly on the right of the directed line $\lne_{U_1,T_0}$ from $U_1$ to $T_0$. Therefore, denoting the disk of radius $\epsilon$ around $Q^\ast$ by $D=D(\epsilon)$, we can pick a small positive $\epsilon\in\real$ such that $D$ is strictly on the right of $\lne_{U_1,T_0}$, strictly on the left (that is, below) $\lne_0$, and $P^\dagger\notin D$.  (These stipulations ensure that Figure~\ref{figngTZNmsQ} on the right is appropriate to show the general case.) Let $s$ be the upper tangent line of $D$ through $T_0$, as indicated in the figure. Let $\alpha:=\pi-\dir s$; it is positive. Now if $Q_n\in D$, then $T_0\in K$ must be on the left of $\lne_{Q_n}$, whence $\dir{\lne_{Q_n}}<\pi-\alpha$. This happens for infinitely many $n\in \mathbb N$, which contradicts \eqref{eqtxtasntoinfty} and  proves \eqref{eqtxtQntndsTQ}.

Now, armed with \eqref{eqtxtasntoinfty} and \eqref{eqtxtQntndsTQ}, we can pick a point $Q\in\bnd K$ before $T_0$ such that 
$\pair Q{\dir{\lne_Q}}$ is ``sufficiently  close'' to $\pair {T_0}\pi$, to be specified soon, and $Q$ is the middle point of the line segment $\lne_Q\cap \bnd K$; see on the left of Figure~\ref{figngTZNmsQ}. Let $\lne^\perp_Q$ be the line through $Q$ such that $\dir{\lne^\perp_Q}=\dir{\lne_Q}+\pi/2$. Denoting the intersection point of $\lne^\perp_Q$ and $\lne_1$ by $Q'$, the term ``sufficiently close'' two sentences above means that $Q'$ is an interior point of the line segment $[T_1,U_1]$. It follows from  \eqref{eqtxtasntoinfty} and \eqref{eqtxtQntndsTQ} that this choice of $Q$ is possible. Clearly, $Q'\in\bnd H$. 
Let $h$ be the line through $Q'$ such that $h\parallel \lne_Q$ but they are directed oppositely. Then $h$ is perpendicular to $\lne^\perp_Q$, and  it follows from \eqref{eqtxtoppsuplines} that $h$ is a supporting line.  This is a contradiction, because the only supporting line through $Q'$ is $\lne_1$ but $\dir{h}=\dir{\lne_Q}+\pi\neq \pi+\pi \equiv 0=\dir{\lne_1}$ (mod $2\pi$). This contradiction completes the proof of \eqref{eqtxtntGntRl}.

From now on, we say that $P_1,P_2\in \bnd K$ are \emph{opposite} (\emph{boundary}) \emph{points} if $P_1\neq P_2$ and for both $i\in\set{1,2}$, the line through $P_i$ that is perpendicular to the line segment $[P_1,P_2]$ is a tangent line of $K$. In this case, the line segment $[P_1,P_2]$ is called a \emph{diagonal} of $K$.  Combining  \eqref{eqtxtfllllnE},   \eqref{eqtxtoppsuplines}, \eqref{eqtxtsmoothallpnts}, and \eqref{eqtxtntGntRl}, we obtain that
\begin{equation}
\parbox{9.5cm}{for each $P_1\in \bnd K$, there exists a unique $P_2\in \bnd H$ such that these two points are opposite. In other words, each point of $\bnd K$ is one of the endpoints of a unique diagonal of $K$.}
\label{eqtxtDGdFtn}
\end{equation}
Next, take an arc $A$ of $\bnd K$ such that $\lne_P$ is not vertical, that is, $\dir{\lne_P}\notin\set{\pi/2,3\pi/2}$  for all $P\in K$. 
Also, take a coordinate system with  $x$-axis of direction 0; the position of the origin is irrelevant. It is easy see and the argument leading to Cz\'edli and Stach\'o~\cite[(4.3)]{czgstacho} clearly shows that $A$ is the graph of a unique real-valued function $p$. Furthermore, by \eqref{eqtxtsmoothallpnts} and (the argument leading to)  \cite[(4.6)]{czgstacho},
\begin{equation}
p\text{ is continuously differentiable in the interior of its domain.}
\label{eqtxtcontdif}
\end{equation}

\begin{figure}[ht] 
\centerline
{\includegraphics[scale=1.0]{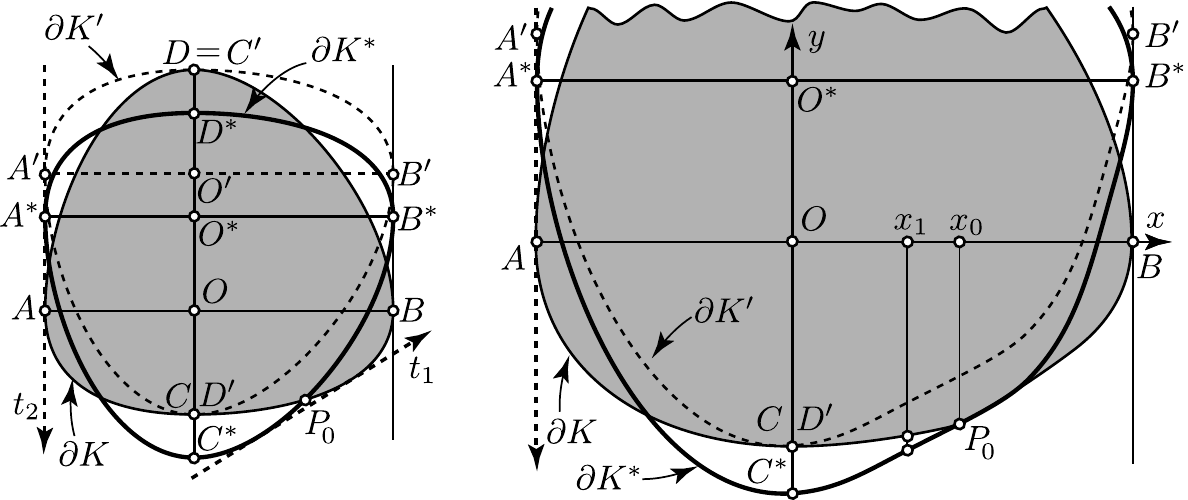}}
\caption{If \eqref{eqtxtperhalv} fails}
\label{figngRpjRpncZ} 
\end{figure}%

Next, we assert that 
\begin{equation}
\text{any two perpendicular diagonals of $K$ halve each other.}
\label{eqtxtperhalv}
\end{equation}
Suppose, for a contradiction, that this is false; 
see Figure~\ref{figngRpjRpncZ} on the left. Then there are perpendicular diagonals $[A,B]$ and $[C,D]$ of $K$ such that the first one does not halve the second.
Denoting their intersection point by $O$, we can assume that $\dist CO<\dist DO$. We choose a coordinate system according to the figure, that is, the origin is $O$, $B$ is on the positive part of the $x$-axis, and $D$ is above $O$. 
To obtain $K'$, we reflect $K$ to  the horizontal line that halves the distance $\dist CD$. 
The image of a point $X$ with respect to this reflection is denoted by $X'$; note that $C'=D$ and $D'=C$. 
Shift $K'$ down by a small $\epsilon\in\plreal$ to obtain $K^\ast$; the image of a point $X'$ by this shift is denoted by $X^\ast$. We can assume that $\epsilon$ is small enough to ensure that 
\begin{equation}
\text{$A^\ast$ and $B^\ast$ are in the interiors of $[A,A']$ and $[A,A']$, respectively.}
\label{eqtxtdhbnRmhfRl}
\end{equation} 
We focus on the south-eastern arcs of $\bnd K$ and $\bnd{K^\ast}$; see on the right of Figure~\ref{figngRpjRpncZ}. By \eqref{eqtxtcontdif}, these arcs can be defined by continuously differentiable real-valued functions $u$ and $u^\ast$, respectively. At $x=0$,  $C^\ast$ is below $C$, that is,   $u^\ast(0)<u(0)$, but later $B^\ast$ is above $B$. Hence, by continuity, there is a smallest $x_0$ where $u(x_0)=u^\ast(x_0)$, that is, where the two arcs \emph{meet} at a point,  which is denoted by $P_0$. 
The continuously differentiable function $v(x):=u^\ast(x)-u(x)$ is negative in $(0,x_0)$ and $v(x_0)=0$. We claim that 
\begin{equation}
\text{for every (small) $\delta\in (0,x_0)$,  $\exists x_1\in(x_0-\delta, x_0)$ with $v'(x_1)>0$.}
\label{eqtxtdzBmWxmbhW}
\end{equation}
In order to see this, suppose to the contrary that \eqref{eqtxtdzBmWxmbhW} fails. Then we have a $\delta\in (0,x_0)$ such that $v'(x)$ is nonpositive on $(x_0-\delta,x_0)$. Hence, the Newton--Leibniz rule yields that $v(x_0)-v(x_0-\delta)= \int_{x_0-\delta}^{x_0}v'(t)dt \leq 0$. Thus, $v(x_0-\delta)>v(x_0)=0$, contradicting the fact that $v(x)$ is negative in $(0,x_0)$. This proves \eqref{eqtxtdzBmWxmbhW}. 

\begin{figure}[ht] 
\centerline
{\includegraphics[scale=1.0]{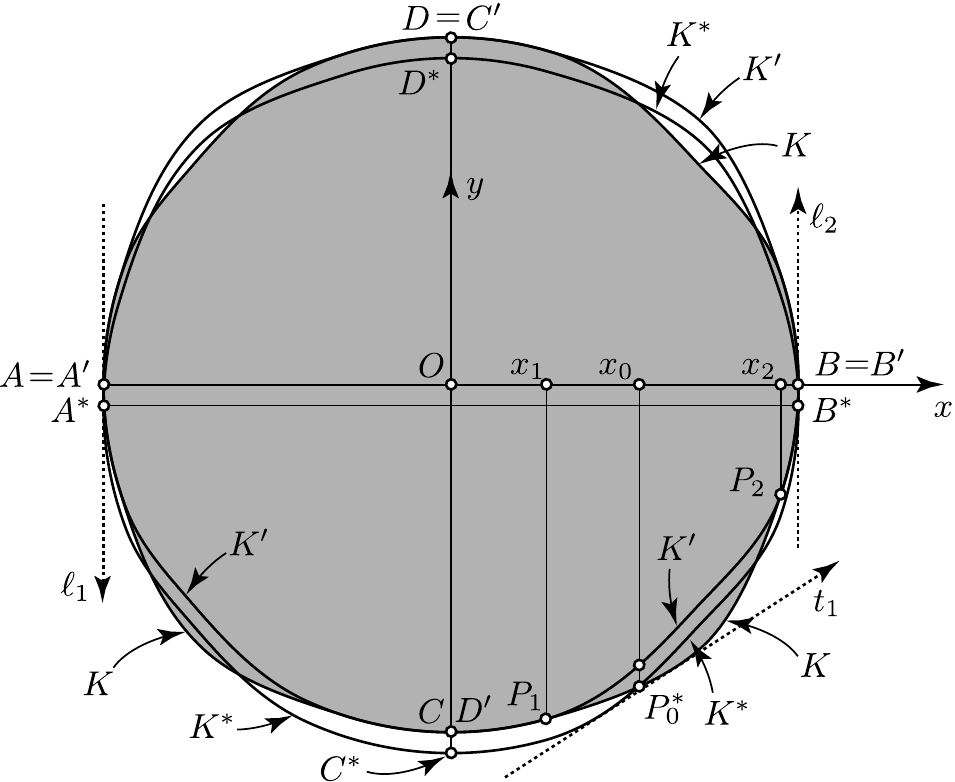}}
\caption{If \eqref{eqtxKsmwrtDgnls} fails}
\label{figjQRpxddhTZ} 
\end{figure}%

Next, since $v'$ is continuous at $x_0$ by \eqref{eqtxtcontdif}, it follows from  \eqref{eqtxtdzBmWxmbhW} that $v'(x_0) \geq 0$. Hence, there are two cases.
First, assume that $v'(x_0) > 0$, that is, ${u^\ast}{}'(x_0)>u'(x_0)$. This means that $\bnd K$ and $\bnd{K^\ast}$ cross each other at $P_0$ as indicated on the left of Figure~\ref{figngRpjRpncZ}, that is, their angle at $P_0$ is not 0. Since we can rotate the figure such that (the Intersection) Lemma~\ref{lemmaintersect} applies, we conclude that the common tangent line $t_1$, see on the left of the figure, touches $K^\ast$ before $K$. By \eqref{eqtxtdhbnRmhfRl}, $A^\ast$ is above $A$, whereby the common tangent line $t_2$ through them touches $K^\ast$ before $K$. Hence, (the Cross) Lemma~\ref{lemmacross} gives a contradiction. 
Second, assume that $u^\ast{}'(x_0)=u'(x_0)$; see on the right of Figure~\ref{figngRpjRpncZ}. Then \eqref{eqtxtdzBmWxmbhW} allows us to shift $K^\ast$ vertically upwards a little bit so that \eqref{eqtxtdhbnRmhfRl} remains valid and we obtain the first case with some $x_1$ instead of $x_0$. Hence, the second case leads to the same contradiction as the first one. This proves \eqref{eqtxtperhalv}. 

Next, we claim that 
\begin{equation}
\text{$K$ is symmetric with respect to each of its diagonals.}
\label{eqtxKsmwrtDgnls}
\end{equation}
In order to prove \eqref{eqtxKsmwrtDgnls}, let $[A,B]$ be a diagonal of $K$. Choosing the coordinate system appropriately, we can assume that this diagonal is horizontal and $A$ is to the left of $B$; see Figure~\ref{figjQRpxddhTZ}. As always, $K$ is grey-filled in the figure. We obtain $K'$ by reflecting $K$ across $[A,B]$; we need to show that $K'=K$. Actually, it suffices to show that $\bnd K=\bnd{K'}$. It suffices to deal with the lower right quarters of $\bnd K$ and $\bnd{K'}$, since the rest of the quarters can be treated similarly. Suppose, for a contradiction, that these lower right quarters are distinct. 
Denote by $[C,D]$ the diagonal of $K$ that is perpendicular to $[A,B]$, and let $O$ be the intersection point of the two diagonals. The notational conventions from the proof of \eqref{eqtxtperhalv} are still valid; for example, the mirror image of a point $X$ across the diagonal $[A,B]$ is denoted by $X'$; see  Figure~\ref{figjQRpxddhTZ}. Let $u_1$ and $u_2$ be the real-valued functions describing the lower right quarters of $\bnd K$ and $\bnd{K'}$. Since $C=D'$ by \eqref{eqtxtperhalv}, $u_1(0)=u_2(0)$. Let
\[x_1:=\sup\set{x': u_1(x)=u_2(x)\text{ for all }x\in[0,x']}. 
\]
Since $u_1$ and $u_2$  are continuously differentiable, see \eqref{eqtxtcontdif}, and, in particular, they are continuous, $u_1(x_1)=u_2(x_1)$. This situation is illustrated in Figure~\ref{figjQRpxddhTZ}. Note that, as opposed to the figure, we do not claim the existence of a \emph{first} $x_2$ such that $x_2>x_1$ and $u_1(x_2)=u_2(x_2)$. What we claim is that
\begin{equation}
\parbox{7.7cm}{in each right neighborhood of $x_1$, there exists an $x_0$ such that $u_1(x_0)\neq u_2(x_0)$ and $u_1'(x_0)\neq u_2'(x_0)$.}
\label{eqtxtdhnBpPlMqx}
\end{equation} 
In order to show this, define an auxiliary function $v$ by $v(x)=u_1(x)-u_2(x)$; this function is again continuously differentiable. Let $\epsilon\in\plreal$ be an arbitrary small number. Since  $x_1$ is defined as a supremum,  there exists an $x_3\in (x_1,x_1+\epsilon)$ such that $v(x_3)\neq 0$. By the continuity of $v$, we can take the largest $x_4\in[x_1,x_3]$ such that $v(x_4)=0$. Clearly, $x_1\leq x_4< x_3$ and $v$ has no root in the open interval $(x_4,x_3)$. 
By Lagrange's mean value theorem, there exists an $x_0\in(x_4,x_3)$ such that $v'(x_0)=(v(x_3)-v(x_4))/(x_3-x_4)=v(x_3)/(x_3-x_4) \neq 0$. Since $x_0\in(x_4,x_3)$, we also have that $v(x_0)\neq 0$, proving \eqref{eqtxtdhnBpPlMqx}.

Since $K$ and $K'$ play symmetric roles in our argument, we can assume that for $x_0$ in \eqref{eqtxtdhnBpPlMqx}, $v_1(x_0)<v_2(x_0)$. That is, at $x_0$, (the lower half of) $\bnd K$ is below $\bnd{K'}$.
Let $K^\ast$ denote what we obtain from $K'$ by shifting it vertically downwards by $v_2(x_0)-v_1(x_0)$. The intersection point of $\bnd K$ and $\bnd{K^\ast}$ with $x$-coordinate equal to $x_0$ will be denoted by $P_0^\ast$; see the figure. By the choice of $x_0$, $\bnd K$ and $\bnd{K^\ast}$ cross each other at $P_0^\ast$ with a nonzero angle. Let $t_1$ be the common tangent line provided by (the Intersection) Lemma~\ref{lemmaintersect}, applied either to $K$ and $K^\ast$, or to $K^\ast$ and $K$. 
Let $\lne_1$ and $\lne_2$ be the two vertical tangent lines of $K$. They are also tangent lines of $K^\ast$ and they are oppositely directed. 
Observe that $\lne_1$ touches $K$ first and $K^\ast$ only later but $\lne_2$ touches $K$ and $K^\ast$ in the reverse order.  Hence, we can pick a common tangent line $t_2\in\set{\lne_1,\lne_2}$ such that $t_1$ and $t_2$ yield a contradiction by (the Cross) Lemma~\ref{lemmacross}. This contradiction proves \eqref{eqtxKsmwrtDgnls}.

From now on, we pick two perpendicular diagonals $[A,B]$ and $[C,D]$, and let $O$ denote the point where they intersect; actually, where they halve each other. Let $\rho_1$ and $\rho_2$ be the axial reflections to the lines (determined by) $[A,B]$ and $[C,D]$, respectively. By \eqref{eqtxKsmwrtDgnls}, $K$ is invariant with respect to $\rho_1$ and also to $\rho_2$. Hence $K$ is invariant with respect to the composite map $\rho_1\circ\rho_2$, which is the central symmetry across $O$. This proves that
\begin{equation}
\text{$K$ is centrally symmetric and $O$ is the center of its symmetry.}
\label{eqtxtcTnrlsM}
\end{equation}

Next, we claim that 
\begin{equation}
\text{every diagonal of $K$ goes through $O$, the center  of symmetry.}
\label{eqtxtlldThGhH}
\end{equation}
In order to prove this, let $[X_1,X_2]$ be a diagonal of $K$. For $i\in\set{1,2}$, let $\lne_i$ denote the tangent line through $X_i$. By the definition of a diagonal, see around \eqref{eqtxtDGdFtn}, $\lne_1\parallel \lne_2$. Reflect  $X_1$ and $\lne_1$ across the point $O$; the point and line we obtain in this way are denoted by $X_1'$ and $\lne_1'$, respectively.
Also, let $K'$ denote the image of $K$ with respect to this point reflection. Clearly, $\lne_1'$ is tangent to $K'$ at $X_1'$. Actually,  $\lne_1'$ is tangent to $K$ at $X_1'$ since $K'=K$ by \eqref{eqtxtcTnrlsM}. Since $\lne_1'\parallel \lne_1$, we have three non-directed parallel tangent lines, $\lne_1$, $\lne_2$, and $\lne_1'$. By \eqref{eqtxtlwSnthGt}, these three lines cannot be distinct. Since
$\lne_1'$ is clearly distinct from $\lne_1$ and $\lne_2\neq\lne_1$, it follows that $\lne_1'=\lne_2$. Combining this with \eqref{eqtxtntGntRl}, we obtain that $X_2=X_1'$. Hence, $O\in [X_1,X_1']=[X_1,X_2]$, proving \eqref{eqtxtlldThGhH}.

Observe that in order to prove that $K$ is a disk, 
\begin{equation}
\parbox{7.4cm}{it suffices to show that each point  $P_0\in\bnd K$ has a neighborhood in which $\bnd K$ is a circular arc.}
\label{eqtxtdkhgnbdTsn}
\end{equation}
Indeed, in this case the compact set $\bnd K$ is covered by finitely many open sets, some of the neighborhoods above. We can assume that these open sets are pairwise incomparable with respect to set inclusion.  Arranging these open sets, which are circular arcs, cyclically counterclockwise according to their first limit points (outside them)  on $\bnd K$, we conclude that any two consecutive circular arcs  have a nonempty open intersection. So these two circular arcs have three (actually,  infinitely many) non-collinear points in common, which implies that these two circular arcs lie on the same circle. Therefore, all the finitely many circular arcs that cover $\bnd K$ lie on the same circle and $\bnd K$ is a circle. This shows the validity of \eqref{eqtxtdkhgnbdTsn}.

\begin{figure}[ht] 
\centerline
{\includegraphics[scale=1.0]{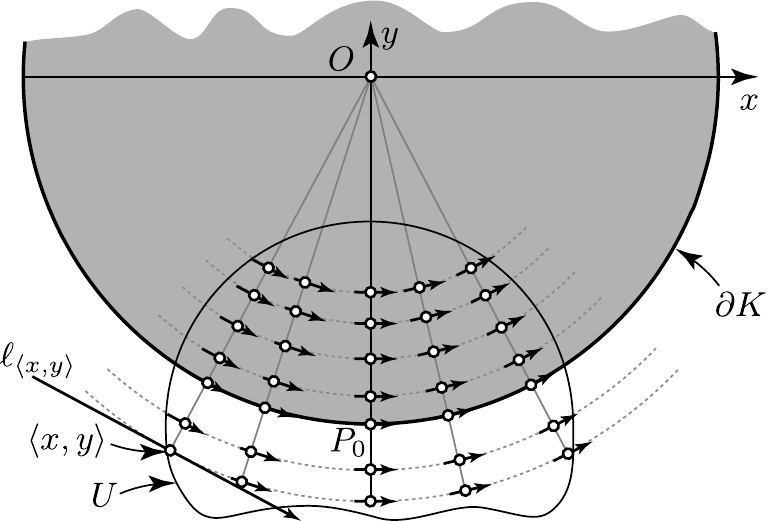}}
\caption{Possible tangent lines at $\pair xy\in U$}
\label{figjQvfHglDsZ} 
\end{figure}%

Next, combining \eqref{eqtxtoppsuplines}, \eqref{eqtxtsmoothallpnts}, \eqref{eqtxtntGntRl},  \eqref{eqtxtDGdFtn}, and \eqref{eqtxtlldThGhH},  we obtain that 
\begin{equation}
\parbox{7.6cm}{for every point $P_0\in\bnd K$, the unique tangent line $\lne_{P_0}$ is perpendicular to the line segment $[O,P_0]$.}
\label{eqtxtdpzRprdklr}
\end{equation}
Let $P_0$ be an arbitrary point of $\bnd K$. 
By choosing the coordinate system appropriately, we can assume that $\lne_{P_0}$ is horizontal and, in addition, $\dir{\lne_{P_0}}=0$; see Figure~\ref{figjQvfHglDsZ}. We assume that the coordinate axes $x$ and $y$ intersect at $O$, $x$ is horizontal and $P_0$ is below $x$; see the figure. We let $r_0=\dist O{P_0}$, that is, $P_0=\pair{0}{-r_0}$.
Let $U\subseteq \preal$ be a bounded open set such that $P_0\in U$, $U$ is below the $x$-axis, and the distance of $U$ and the $x$-axis is positive. For $\pair x y\in U$, let $\lne_{\pair x y}$ denote the unique line through $\pair x y$ such that $\lne_{\pair x y}$ is perpendicular to the line from $O$ through $\pair x y$.  We direct $\lne_{\pair x y}$ so that $O$ is on its left.  
In Figure~\ref{figjQvfHglDsZ},  $\lne_{\pair x y}$ is a long directed line for one choice of $\pair x y$, and it is a short directed line for several other choices of $\pair x y$. Let  $f(x,y)$ be the slope of $\lne_{\pair x y}$. It is easy to see that, for $\pair x y\in U$, $f(x,y)=-x/y$.   
Let $p_0$ be the real-valued function whose graph is the arc  $U\cap\bnd K$ of $\bnd K$. It is continuously differentiable by \eqref{eqtxtcontdif}. Hence, it follows from  \eqref{eqtxtdpzRprdklr}  that the function 
$p_0$ is a solution of the differential equation  
\begin{equation} p'(x)= f(x, p(x))\,\,\text{ with the initial condition }\,\, p(0)=y_0.
\label{eqtxtdffEq}
\end{equation}
Obviously, the circular arc $p_1(x):=-\sqrt{r_0^2-x^2}$ is a solution of \eqref{eqtxtdffEq} in a neighborhood of $0$. Since $U$ is strictly on the lower halfplane,   $f(x,y)=-x/y$ and its partial derivative, $\partial f/\partial y$ are continuous in $U$. Therefore,  to \eqref{eqtxtdffEq}, we can apply the well-known uniqueness theorem for differential equations; see, for example, King, Billingham, and Otto~\cite[Thm.\ 8.2 in page 211]{kingatal} or Ricardo~\cite[page 90]{ricardo}. In this way, we conclude that $p_1$ is the only solution of  \eqref{eqtxtdffEq} in a neighborhood of $0$. Thus, the two solutions, $p_0$ and $p_1$, are equal. Hence,
$\bnd K$ is a circular arc in a neighborhood of $P_0=\pair 0{p_0(0)}$. Therefore, since $P_0\in\bnd K$ was arbitrary,  \eqref{eqtxtdkhgnbdTsn} completes the proof.
\end{proof}

We conclude the paper with two remarks on the last part of the proof above.

\begin{remark} Instead of \eqref{eqtxtdkhgnbdTsn} and using compactness, it suffices to consider only three appropriately chosen points of $\bnd K$ to play the role of $P_0$ around \eqref{eqtxtdffEq}.
\end{remark}

\begin{remark} Instead of 
\eqref{eqtxtdffEq}, there is a bit more elementary but more computational argument. It runs as follows; we only outline it below. With respect to a coordinate system whose origin is $O$, we can write $\bnd K$ in the form $\set{\vec r(\alpha): \alpha\in \ucirc}$; see \eqref{equnitCrlcle}. In a standard way, it follows from \eqref{eqtxtcontdif} and the differentiability of compound functions  that $\vec r(\alpha)$ is everywhere differentiable. It is well known and easy to conclude that the slope of the tangent line $\set{\vec r(\alpha): \alpha\in \ucirc}$ through $\vec r(\alpha)$ is
\begin{equation}
\frac{\vvecr(\alpha)\sin \alpha  + \vec r(\alpha)\cos\alpha}{\vvecr(\alpha)\cos\alpha - \vec r(\alpha)\sin \alpha}\,.
\label{eqdhcnprdrVtvs}
\end{equation}
By \eqref{eqtxtdpzRprdklr}, 
the slope of this tangent line is $-\cos\alpha/\sin\alpha$. 
Comparing this with \eqref{eqdhcnprdrVtvs}, an easy calculation shows that $\vvecr(\alpha)=0$, for all $\alpha\in\ucirc$. Hence, $\vec r\,$ is a constant function and $\bnd K$ is a circle, as required.
\end{remark}

\subsection{Added on March, 9, 2017}
The referee called my attention to the fact that the following statement was implicit in the December 12, 2016 version of the paper. The meaning of ``cross each other" is given right after (the Cross) Lemma~\ref{lemmacross}.

\begin{lemma}\label{lemmalstmNt} Let $K_0$ be  a  compact convex subset of the plane $\preal$.
Then $K_0$ is a disk if and only if for every $K_1\subset \preal$ such that $K_1$ is isometric to $K_0$, the sets $K_0$ and $K_1$ do not cross each other.
\end{lemma}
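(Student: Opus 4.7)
I would prove the two directions separately. For the easy direction $(\Rightarrow)$, assume $K_0$ is a disk and let $K_1$ be isometric to $K_0$; then $K_1$ is a disk of the same radius. If $K_0=K_1$, the crossing condition fails vacuously because $K_0\setminus K_1=\emptyset$. Otherwise, since two distinct disks of equal radius cannot be nested, the only directed lines that support both $K_0$ and $K_1$ on the left closed halfplane are the two external common tangents, which are parallel to the segment joining the two centres. Placing the centres on the $x$-axis, a direct coordinate check shows that the convention \eqref{eqtxtlwSnthGt} forces the upper external tangent to be oriented opposite to the lower one. Hence the unique tangent points of $K_0$ and $K_1$ on these two lines appear in opposite orders, and therefore neither the $\pair{K_0}{K_1}$ pattern nor the $\pair{K_1}{K_0}$ pattern prescribed in (the Cross) Lemma~\ref{lemmacross} can hold on both common supporting lines simultaneously; that is, $K_0$ and $K_1$ do not cross.

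For the converse $(\Leftarrow)$, the key observation is that the proof of \eqref{thmmainc}$\Rightarrow$\eqref{thmmaina} of Theorem~\ref{thmmain} given above invokes hypothesis \ref{thmmain}\eqref{thmmainc} only through (the Cross) Lemma~\ref{lemmacross}; the schema \eqref{eqtxttchnQe} makes this explicit. Therefore, replacing \ref{thmmain}\eqref{thmmainc} with the no-crossing hypothesis of the present lemma, the entire derivation --- the successive statements \eqref{eqtxtoppsuplines}, \eqref{eqekvdSt}, \eqref{eqtxtsmoothallpnts}, \eqref{eqtxtntGntRl}, \eqref{eqtxtperhalv}, \eqref{eqtxKsmwrtDgnls}, \eqref{eqtxtcTnrlsM}, \eqref{eqtxtlldThGhH} and the differential-equation argument around \eqref{eqtxtdffEq} --- carries over verbatim, yielding that $K_0$ is a disk, \emph{provided} we first reduce to the case that $K_0$ has nonempty interior.

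The only genuinely new task, and the expected main obstacle, is this preliminary reduction: singletons are disks, so the real issue is ruling out that $K_0$ is a nondegenerate line segment. The previous argument (Figure~\ref{figrtQmsMgl}) invoked \ref{thmmain}\eqref{thmmainc} rather than no-crossing, so I must exhibit an explicit isometric copy $K_1$ that crosses $K_0$. I plan to take $K_0=[(-1,0),(1,0)]$ and let $K_1$ be the rotation of $K_0$ about the origin by a small positive angle $\theta$; the two segments meet only at the origin. A short calculation identifies exactly two common supporting lines --- the line through $(1,0)$ and $(\cos\theta,\sin\theta)$ and the parallel one through $(-1,0)$ and $(-\cos\theta,-\sin\theta)$; other candidate lines are excluded by the observation that any line through an endpoint of $K_0$ and an interior point of $K_1$ has points of $K_1$ on both sides. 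On both of these tangents, directed according to \eqref{eqtxtlwSnthGt}, the endpoint of $K_0$ precedes the endpoint of $K_1$, so the $\pair{K_0}{K_1}$ crossing pattern is realised, contradicting the no-crossing hypothesis and completing the segment-exclusion step.
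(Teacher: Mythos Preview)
Your approach is essentially the paper's: for the nontrivial direction with nonempty interior you invoke the schema \eqref{eqtxttchnQe}, which is exactly what the paper does in its two-sentence proof; and you supply explicit details for the two cases the paper dismisses as ``obvious'' (the forward direction and the empty-interior case).

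One small inaccuracy: in the segment construction, the claim that there are \emph{exactly two} common supporting lines is false. The convex hull of $K_0\cup K_1$ is a quadrilateral, and all four of its edge-lines are common supporting lines (and there is in fact a whole arc of common supporting lines through each vertex). Your exclusion argument (``any line through an endpoint of $K_0$ and an interior point of $K_1$ \dots'') does not rule out the edges through $(\cos\theta,\sin\theta)$ and $(-1,0)$, or through $(-\cos\theta,-\sin\theta)$ and $(1,0)$, since those pass through two endpoints. Fortunately this does not matter: you only need to \emph{exhibit} two common supporting lines on which the $\pair{K_0}{K_1}$ pattern holds, and your edges~1 and~3 do exactly that. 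So drop the word ``exactly'' and the exclusion sentence, and the argument is clean.
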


If the interior of $K_0$ is nonempty, then the nontrivial direction of this lemma follows from \eqref{eqtxttchnQe}, while the statement is obvious if the interior of $K_0$ is empty.

In a forthcoming paper, we will use Lemma~\ref{lemmalstmNt} to establish a connection between the present paper and Fejes-T\'oth~\cite{fejestoth}. Finally, we note that our topic is also in connection with  a quite recent paper  by Kincses~\cite{kincses}.

\end{document}